\newcommand{\cmark}{\textcolor{green!80!black}{\ding{51}}}
\newcommand{\xmark}{\textcolor{red}{\ding{55}}}
\newcommand{\n}{n}
\newcommand{\degree}{d}
\newcommand{\dii}{\degree_i}
\newcommand{\dd}{\mathbf{d}}
\newcommand{\cE}{\mathcal{E}}
\newcommand{\cG}{\mathcal{G}}
\newcommand{\cW}{\mathcal{W}}
\newcommand{\cS}{\mathcal{S}}
\newcommand{\cD}{\mathcal{D}}
\newcommand{\cN}{\mathcal{N}}
\newcommand{\cQ}{\mathcal{Q}}
\newcommand{\cH}{\mathcal{H}}
\newcommand{\R}{\mathbb{R}}
\newcommand{\N}{\mathbb{N}}
\newcommand{\x}{x}
\newcommand{\s}{s}
\newcommand{\z}{z}
\newcommand{\y}{y}
\newcommand{\vv}{v}
\newcommand{\pr}{\gamma}
\newcommand{\Ax}{A_{\x}}
\newcommand{\An}{A_{\nabla}}
\newcommand{\iter}{{t}}
\newcommand{\iterp}{{\iter + 1}}
\newcommand{\xt}{\x^{\iter}}
\newcommand{\zt}{\z^{\iter}}
\newcommand{\xtp}{\x^{\iterp}}
\newcommand{\ztp}{\z^{\iterp}}
\newcommand{\xstar}{\x^\star}
\newcommand{\sstar}{\s_\star}
\newcommand{\ystar}{\y_\star}
\newcommand{\sstart}{\sstar^\iter}
\newcommand{\ystart}{\ystar^\iter}
\newcommand{\xjtp}{\x_{j}^{\iter+1}}
\newcommand{\xitp}{\x_{i}^{\iter+1}}
\newcommand{\sitp}{\s_{i}^{\iter+1}}
\newcommand{\zijtp}{\z_{ij}^{\iter+1}}
\newcommand{\yitp}{\y_{i}^{\iter+1}}
\newcommand{\xit}{\x_{i}^{\iter}}
\newcommand{\sit}{\s_{i}^{\iter}}
\newcommand{\zit}{\z_{i}^{\iter}}
\newcommand{\zijt}{\z_{ij}^{\iter}}
\newcommand{\zjit}{\z_{ji}^{\iter}}
\newcommand{\yit}{\y_{i}^{\iter}}
\newcommand{\xjt}{\x_{j}^{\iter}}
\newcommand{\sjt}{\s_{j}^{\iter}}
\newcommand{\yjt}{\y_{j}^{\iter}}
\newcommand{\g}{\ell}
\newcommand{\git}{\g_i^\iter}
\newcommand{\tx}{\tilde{\x}}
\newcommand{\tz}{\tilde{\z}}
\newcommand{\txtp}{\tx^{\iter+1}}
\newcommand{\tztp}{\tz^{\iter+1}}
\newcommand{\txt}{\tx^{\iter}}
\newcommand{\tzt}{\tz^{\iter}}
\newcommand{\bx}{\bar{\x}}
\newcommand{\bz}{\bar{\z}}
\newcommand{\px}{\x_\perp}
\newcommand{\pz}{\z_\perp}
\newcommand{\bzt}{\bz^{\iter}}
\newcommand{\bztp}{\bz^{\iter+1}}
\newcommand{\pxt}{\px^{\iter}}
\newcommand{\pxtp}{\px^{\iter+1}}
\newcommand{\pzt}{\pz^{\iter}}
\newcommand{\pztp}{\pz^{\iter+1}}
\newcommand{\mut}{\bx^\iter}
\newcommand{\mutp}{\bx^{\iterp}}
\newcommand{\tpz}{\tilde{\z}_\perp}
\newcommand{\tpzt}{\tpz^\iter}
\newcommand{\tpztp}{\tpz^{\iterp}}
\newcommand{\Qtpz}{Q_{\tpz}}
\newcommand{\Vpx}{W_{\px}}
\newcommand{\Vmu}{W_{\bx}}
\newcommand{\str}{\mu}
\newcommand{\bb}{b}
\newcommand{\B}{B}
\newcommand{\M}{M}
\newcommand{\rr}{R}
\newcommand{\G}{G}
\newcommand{\one}{\mathbf{1}}
\newcommand{\oneNn}{\one_{N,\n}}
\newcommand{\zeq}{\z^{\text{eq}}}
\newcommand{\pzeq}{\pz^{\text{eq}}}
\newcommand{\norm}[1]{\left \|#1 \right \|}
\newcommand{\T}{^\top}
\newcommand{\inv}{^{-1}}
\newcommand{\map}[3]{#1: #2 \rightarrow #3}
\newcommand{\blkdiag}{\text{blkdiag}}
\newcommand{\until}[1]{\{1,\ldots,#1\}}
\newcommand{\col}{\textsc{col}}
\newcommand{\lit}{\lambda_i(\iter)}
\newcommand{\ljt}{\lambda_j(\iter)}
\newcommand{\lita}{\lambda_{i}(\initer)}
\newcommand{\psiijt}{\psi_{ij}(\iter)}
\newcommand{\bijt}{\beta_{ij}(\iter)}
\newcommand{\Lt}{\Lambda(\iter)}
\newcommand{\Psiit}{\Psi_i(\iter)}
\newcommand{\Psit}{\Psi(\iter)}
\newcommand{\lip}{L}
\newcommand{\F}{F}
\newcommand{\initer}{\tau}
\newcommand{\av}{_{\text{av}}}
\newcommand{\Wa}{W\av}
\newcommand{\bstepa}{\bar{\pr}\av}
\newcommand{\Wai}{W_{i,\text{av}}}
\newcommand{\Psitau}{\Psi(\initer)}
\newcommand{\EL}{\Lambda\av}
\newcommand{\Eli}{\lambda_{i,\text{av}}}
\newcommand{\txa}{\tx\av}
\newcommand{\mua}{\bx\av}
\newcommand{\muat}{\mua^\iter}
\newcommand{\pxa}{\x_{\perp,\text{av}}}
\newcommand{\ra}{r\av}
\newcommand{\rai}{r_{i,\text{av}}}
\newcommand{\txai}{\tx_{i,\text{av}}}
\newcommand{\txaj}{\tx_{j,\text{av}}}
\newcommand{\el}{\epsilon_{\lambda}}
\newcommand\oprocendsymbol{\hbox{$\square$}}
\newcommand\oprocend{\relax\ifmmode\else\unskip\hfill\fi\oprocendsymbol}
\def\eqoprocend{\tag*{$\square$}}
\def\er/{Erd\H{o}s-R\'enyi}
\newcommand{\m}{n_d}
\newcommand{\err}{e}
\newcommand{\errxi}{\err_{i,\x}}
\newcommand{\errzi}{\err_{i,\z}}
\newcommand{\errxit}{\errxi^\iter}
\newcommand{\errzit}{\errzi^\iter}
\newcommand{\errx}{\err_{\x}}
\newcommand{\errz}{\err_{\z}}
\newcommand{\errxt}{\errx^\iter}
\newcommand{\errzt}{\errz^\iter}
\newcommand{\errt}{\err^\iter}
\newcommand{\trt}{\tilde{\err}^\iter}
\newcommand{\fxi}{F_{\xi}}
\newcommand{\Vxi}{V_{\xi}}
\newcommand{\Lxi}{\lip_{\xi}}
\newcommand{\step}{\delta}
\newcommand{\bpr}{\bar{\step}}
\newcommand{\msg}{m}
\newcommand{\msgijt}{\msg_{ij}^\iter}
\newcommand{\msgjit}{\msg_{ji}^\iter}
\newcommand{\hF}{\hat{\F}}
\newcommand{\hFt}{\hF(\iter)}
\newcommand{\hFta}{\hF(\initer)}
\newcommand{\hT}{\hat{T}}
\newcommand{\hTt}{\hT(\iter)}
\newcommand{\hTta}{\hT(\initer)}
\newcommand{\Tmax}{T_{\text{max}}}
\newcommand{\diagentry}[1]{\mathmakebox[1.8em]{#1}}
\newcommand{\xddots}{%
	\raise 4pt \hbox {.}
	\mkern 6mu
	\raise 1pt \hbox {.}
	\mkern 6mu
	\raise -2pt \hbox {.}
}
\DeclareMathOperator*{\argmin}{arg\,min}
\def\er/{Erd\H{o}s-R\'enyi}
\newcommand{\GC}[1]{{\color{black} #1}}
\def\algo/{{ADMM-Tracking Gradient}}
\def\algob/{{ADMM-Tracking Gradient V2}}
\def\ralgo/{{Robust ADMM-Tracking Gradient}}
\def\ATG/{{ATG}}
\def\RATG/{{RATG}}
\def\GT/{{Gradient Tracking}}
\newtheorem{theorem}{Theorem}[section]
 \newtheorem{lemma}[theorem]{Lemma}
\newtheorem{assumption}[theorem]{Assumption}
\newtheorem{remark}[theorem]{Remark} 
\title{\algo/\\ for Distributed Optimization over\\ Asynchronous and Unreliable Networks}
\author{Guido Carnevale, Nicola Bastianello, Giuseppe Notarstefano,  Ruggero Carli 
	\thanks{Funded by the European Union - NextGenerationEU under the National Recovery and Resilience Plan (PNRR) - Mission 4 Education and research - Component 2 From research to business - Investment 1.1 Notice Prin 2022 -  DD N. 104 del 2/2/2022, from title ECODREAM Energy COmmunity management: DistRibutEd AlgorithMs and toolboxes for efficient and sustainable operations, proposal code 202228CTKY002 - CUP J53D23000560006.\\
 The work of N. Bastianello was partially supported by the European Union’s Horizon Research and Innovation Actions programme under grant agreement No. 101070162.}
\thanks{Guido Carnevale and Giuseppe Notarstefano are with the Department of Electrical,  Electronic and Information Engineering,  Alma Mater Studiorum - Universita` di Bologna,  Bologna, Italy. Emails: {\tt\footnotesize{name.surname@unibo.it}%
} 
}
	\thanks{Nicola Bastianello is with the School of Electrical Engineering and Computer Science, and Digital Futures, KTH Royal Institute of Technology, Stockholm, Sweden. Email: {\tt\footnotesize nicolba@kth.se}}%
\thanks{Ruggero Carli is with the Department of Information Engineering of the University of Padova, Via G. Gradenigo 6/B, 35131 Padova, Italy. Email: {\tt\footnotesize{carlirug@dei.unipd.it}}%
}
}
\begin{document}

\maketitle
\begin{abstract}
  In this paper, we propose a novel distributed algorithm for consensus
 optimization over networks and a robust extension tailored to deal with asynchronous agents and packet losses.
 \GC{
 Indeed, to robustly achieve dynamic consensus on the solution estimates and the global descent direction, we embed in our algorithms a distributed implementation of the Alternating Direction Method of Multipliers (ADMM).
 }
 Such a mechanism is suitably interlaced with a local proportional
 action steering each agent estimate to the solution of the original
 consensus optimization problem.
 First, in the case of ideal networks, by using tools from system theory, we prove the linear convergence of the scheme with strongly convex costs.
 Then, by exploiting the averaging theory, we extend such a first result to prove that the robust extension of our method preserves linear convergence in the case of asynchronous agents and packet losses.
 Further, by using the notion of Input-to-State Stability, we also guarantee the robustness of the schemes with respect to additional, generic errors affecting the agents' updates.
\GC{Finally, some numerical simulations confirm our theoretical findings and compare our algorithms with other distributed schemes in terms of speed and robustness.}
\end{abstract}

\section{Introduction}

Multi-agent systems have become ubiquitous in many engineering domains, ranging from robotics to power grids, from traffic networks to the Internet of Things. 
Recent technological advances have indeed led to the widespread adoption of devices equipped with computational and communication resources. 
These devices (or agents) then form interconnected systems capable of cooperatively performing a variety of tasks, e.g. learning, resource allocation, and exploration.
Many of these tasks can be formulated as the \emph{consensus (or cost-coupled) optimization} problem
\begin{align*}%
    \min_{\x \in \R^\n} \sum_{i=1}^N f_i(\x),
\end{align*}
where $f_i$ are the local costs of the $N$ agents. This motivated a wide interest in algorithms to enable multi-agent systems to collaborate towards the solution of consensus problems~\cite{notarstefano2019distributed,nedic2018distributed,yang2019survey}.

Different approaches to designing distributed algorithms for consensus optimization have been explored in the literature, with the main ones being decentralized (sub)gradient, Gradient Tracking (GT), and Alternating Direction Method of Multipliers (ADMM)~\cite{notarstefano2019distributed}.
The class of decentralized (sub)gradient methods has the drawback that exact convergence is only achieved using diminishing stepsizes \cite{nedic2018distributed}. Gradient Tracking algorithms were then introduced to guarantee exact convergence with fixed step-sizes, see \cite{shi2015extra,nedic2017achieving,xi2017add}, and have proved their versatility in different setups, see, e.g.,~\cite{bof_multiagent_2019,carnevale2023triggered,dilorenzo2016next,daneshmand2020second,carnevale2022nonconvex,carnevale2022gtadam,yuan_can_2020}.
Besides these gradient-based methods, ADMM, popularized by \cite{boyd_distributed_2010}, has received wide attention and has proven especially suited to distributed setups \cite{mota_d-admm_2013,peng_arock_2016,makhdoumi_convergence_2017,bastianello2020asynchronous}.

However, the successful deployment of distributed algorithms designed according to these approaches is hindered by the practical challenges that multi-agent systems raise.
First of all, the agents cooperating may have highly heterogeneous computational resources, implying that they may perform local updates at different rates. One option is to enforce synchronization of the agents so that they update in lockstep. But besides the technical challenge of synchronization, it would result in faster agents sitting idle before they can initiate a new update \cite{peng_coordinate_2016}. For these reasons, we are interested in \emph{asynchronous} distributed algorithms.
A second challenge is that of \emph{unreliable communications} between the agents, which may be subject to packet losses or quantization errors, especially when wireless transmission is employed \cite{ye_decentralized_2022}. It is therefore important to characterize the resilience of distributed algorithms to imperfect communications.
Finally, in applications such as learning, the large size of the data set stored by an agent implies that evaluating the local gradient may be too computationally expensive. For this reason, \emph{stochastic gradients} are often employed to approximate the local gradient over a subset of the data, which introduces additive errors \cite{li_variance_2022,lei_distributed_2022,hu_push-lsvrg-up_2023}.

Let us now turn back to the distributed algorithms discussed above; among them, ADMM has arguably proven to be the most robust. In particular, it guarantees exact, almost sure convergence in the presence of asynchrony and packet losses \cite{peng_arock_2016,bastianello2020asynchronous,bianchi_coordinate_2016,chang_asynchronous_2016}, and can reach an approximate solution in the presence of additive communication errors \cite{majzoobi_analysis_2019,chang_proximal_2016,elgabli_q-gadmm_2021,bastianello2022admm}.
The drawback of ADMM is that the agents need to solve a local minimization problem to update their state, which in general is more computationally expensive than the gradient evaluations of GT.
Gradient Tracking methods also exhibit convergence in the presence of asynchrony and packet losses \cite{bof_multiagent_2019,tian_achieving_2020,carnevale2023triggered}. However, as highlighted in \cite{pu_robust_2020,bin2019system,notarnicola2023gradient,bin2022stability,wang_gradient-tracking_2022}, they are not robust to additive errors due to, e.g., unreliable communications.
Indeed, GT algorithms are designed to reconstruct the global gradient $\nabla \sum_{i = 1}^N f_i$ using local gradients $\nabla f_i$, which they accomplish employing a dynamic average consensus block (see~\cite{zhu2010discrete,kia2019tutorial}). The average consensus, though, introduces a \emph{marginally stable dynamics}~\cite{bin2022stability}, which causes the lack of robustness to errors.

The objective of this paper then is to propose a novel gradient tracking algorithm that is provably robust to asynchrony and unreliable communications. Our starting point is the insight that the dynamic average consensus block in GT algorithms can be \emph{replaced by a different, robust consensus protocol}, as done with the ratio (or push-sum) consensus in~\cite{bof_is_2018,tian_achieving_2020}.
Specifically, our algorithm uses an ADMM-based dynamic consensus protocol, which was shown to be robust to both asynchrony and unreliable communications \cite{bastianello2022admm}, differently from ratio consensus. The protocol is derived by formulating dynamic consensus as an \emph{online, quadratic problem} and applying the robust distributed ADMM to it~\cite{bastianello2022admm}.
Since the consensus problem is quadratic, the agents' updates have a closed form and no minimization needs to be solved, avoiding the larger computational footprint of ADMM for general problems. 
Our contribution then brings together the best of both Gradient Tracking, with its lightweight computations, and ADMM, with its robustness.

In the following, we summarize our contributions.
We propose a novel distributed gradient tracking method that employs an ADMM-based dynamic consensus protocol. The algorithm then inherits the robustness properties of ADMM to both asynchrony and unreliable communications.
We analyze the convergence of the proposed algorithm -- for synchronous and reliable networks -- by interpreting it as a \emph{singularly perturbed} system given by the interconnection between (i) a slow subsystem given by an approximation of the gradient descent and a consensus step, and (ii) a fast one related to the states of the ADMM-based consensus protocol. With these tools, we are able to show linear convergence to the optimal solution of the consensus optimization problem.
Then, this result allows us to analyze a robust version of the algorithm tailored for networks with asynchrony and packet losses. 
\GC{In this time-varying setup}, we leverage both singular perturbations and \emph{averaging theory} to show that the linear convergence of the scheme is preserved.
Moreover, we are further able to prove that the proposed schemes exhibit Input-to-State Stability (ISS) with respect to additive errors, due, e.g., to unreliable communications or the use of stochastic gradients.
Finally, we showcase the performance of the algorithms in numerical simulations for quadratic and logistic regression problems over random networks. We compare its performance with a standard and a robust version of the GT, which lack the same robustness properties.
Preliminary results related to this paper appeared in~\cite{carnevale2023distributed}.
However, the preliminary work~\cite{carnevale2023distributed} does not consider the scenario over asynchronous and unreliable networks and does not analyze the algorithm's robustness. %
Moreover, in~\cite{carnevale2023distributed}, the convergence proof in the case of ideal networks is omitted.

The paper is organized as follows.
Section~\ref{sec:setup} introduces the problem setup.
In Section~\ref{sec:algo_design}, we design the novel distributed algorithm and provide its convergence properties.
Section~\ref{sec:analysis_algo} is devoted to analyzing the algorithm.
In Section~\ref{sec:asynchronous_and_lossy}, we present a robust version of the algorithm to address the scenario with imperfect networks, while Section~\ref{sec:analysis_ralgo} is devoted to analyzing this second algorithm.
Section~\ref{sec:robustness} characterizes the robustness of the proposed algorithms.
Finally, Section~\ref{sec:numerical_simulations} compares our methods and \GT/ algorithms. %

\paragraph*{Notation}
The identity matrix in $\R^{m\times m}$ is $I_m$. 
The vector of $N$ ones and $N$ zeroes are denoted by $1_N$ and $0_N$, respectively, while
$\oneNn := 1_N \otimes I_\n$ and $\mathbf{0}_{N,\n} := 0_N \otimes I_\n$ with $\otimes$ being the Kronecker product. 
For a finite set $S$, we denote by $|S|$ its cardinality.
The symbol $\col (v_i)_{i \in \until{N}}$ denotes the vertical concatenation of the column vectors $v_1, \dots, v_N$.
We denote as $\blkdiag(M_1, \dots, M_N) \in \R^{\sum_{i=1}^N n_i}$ the block diagonal matrix whose $i$-th block is given by $M_i \in \R^{n_i \times n_i}$.

	\section{Problem Description and Preliminaries}
	\label{sec:setup}
	
	\subsection{Problem Setup}

	We consider a network of $N$ agents that aim to solve %
	\begin{align}\label{eq:problem}
		\min_{\x \in \R^\n} \sum_{i=1}^N f_i(\x),
	\end{align}
	where $\x \in \R^\n$ is the (common) decision variable, while $f_i: \R^\n \to \R$ is the objective function of agent $i \in \until{N}$.
	In the following, we will also use the function $f : \R^\n \to \R$ defined as $f(\x) := \sum_{i=1}^{N} f_i(\x)$.
	Our goal is to design an algorithm to solve~\eqref{eq:problem} in a distributed way, namely with update laws implementable over a network of agents using only (i) local information and (ii) neighboring communication.
	Indeed, we consider a network of agents communicating according to an undirected graph $\cG = (\until{N}, \cE)$, with $\cE \subset \until{N}\times \until{N}$ such that $i$ and $j$ can exchange information only if $(i,j)\in\cE$.  
	The set of neighbors of $i$ is
	$\cN_i := \{j \in \until{N} \mid (i,j) \in \cE\}$, while its degree is $\dii:= |\cN_i|$ and $\dd := \sum_{i=1}^N \dii$. 
	Notice that it holds $\dd = 2|\cE|$. %
	
	The following assumptions formalize the considered setup.
	\begin{assumption}\label{ass:network}[Graph]
		The graph $\cG$ is connected. \oprocend%
	\end{assumption}
	\begin{assumption}\label{ass:strong_and_lipschitz}[Objective functions]
		The objective function $f$ is $\str$-strongly convex, while the gradients $\nabla f_i$ are $\lip$-Lipschitz continuous for all $i \in \until{N}$.\oprocend
	\end{assumption}
	We notice that Assumption~\ref{ass:strong_and_lipschitz} ensures that problem~\eqref{eq:problem} has a unique minimizer and we denote it as $\xstar \in \R^\n$.

	\subsection{Fundamentals of Singularly Perturbed Systems}
    \label{sec:sp}

In the next, we will propose two novel distributed algorithms and, to assess their convergence properties, we will resort to a system-theoretic perspective based on \emph{singular perturbation} theory.
To this end, we provide below a generic convergence result extending~\cite[Theorem~II.5]{carnevale2022tracking} for the time-varying case.
Indeed, we will also consider the case in which the agents are asynchronous and their communication is subject to packet losses and, thus, the result~\cite[Theorem~II.5]{carnevale2022tracking} does not suit our purposes since it only considers time-invariant dynamics.
\begin{theorem}[\textup{Global exponential stability for time-varying singularly perturbed systems}]\label{th:theorem_generic}
	Consider the system
	\begin{subequations}\label{eq:interconnected_system_generic}
		\begin{align}
			\xtp &= \xt + \pr f(\xt,\zt,\iter)\label{eq:slow_system_generic}
			\\
			\ztp &= g(\zt,\xt,\iter),\label{eq:fast_system_generic}
		\end{align}
	\end{subequations}	
	with $\xt \in \mathcal{D} \subseteq \R^n$, $\zt \in \R^m$, $\map{f}{\mathcal{D} \times \R^m \times \N}{\cD}$, $\map{g}{\R^m \times \R^n \times \N}{\R^m}$, $\pr > 0$. 
	Assume that $f$ and $g$ are Lipschitz continuous uniformly in $\iter$ with respect to $\xt$ and $\zt$ with Lipschitz constants $L_f, L_g > 0$, respectively.
	Assume that there exists $\zeq: \R^n \to \R^m$ such that for all $\x \in \cD$ it holds
	\begin{align*}
		0 &= \pr f(0, \zeq(0),
		\iter)
		\\
		\zeq(\x) &= g(\zeq(\x),\x,\iter),
	\end{align*}
	for all $\iter \in \N$ and with $\zeq$ being Lipschitz continuous with Lipschitz constant $L_{\zeq} > 0$. 
	Let
	\begin{equation}\label{eq:reduced_system_generic}
		\xtp = \xt + \pr  f(\xt,\zeq(\xt),\iter)
	\end{equation}
	be the reduced system and
	\begin{equation}\label{eq:boundary_layer_system_generic}
		\tztp = g(\tzt + \zeq(\x),\x,\iter) - \zeq(\x)
	\end{equation}
	be the boundary layer system with $\tzt \in \R^m$. 
	
	Assume that there exist a continuous function $U: \R^m \times \N \to \R$ and $b_1, b_2, b_3, b_4 > 0$ such that
	\begin{subequations}\label{eq:U_generic}
		\begin{align}
			&b_1 \norm{\tz}^2 \leq U(\tz,\iter) \leq b_2\norm{\tz}^2 \label{eq:U_first_bound_generic}
			\\
			&U(g(\tz  \!+\!  \zeq(\x),\x,\iter) \!-\! \zeq(\x),\iterp) \!-\!  U(\tz,\iter)  
			\!\leq\!  
			\! - \! b_3 \! \norm{\tz}^2\!
			\label{eq:U_minus_generic}
			\\
			&|U( \tz_1,\iter)-U( \tz_2,\iter)|\leq b_4\norm{\tz_1- \tz_2}\left(\norm{\tz_1} + \norm{\tz_2}\right)
			,\label{eq:U_bound_generic}
		\end{align}
	\end{subequations}
	for all $\tz,  \tz_1,  \tz_2 \in \R^m$, $\x \in \R^n$, $\iter \in N$.
	Further, assume there exist a continuous function $W:\mathcal{D} \times \N \to \R$ and $\bar{\pr}_1 > 0$ such that, for all $\pr \in (0,\bar{\pr}_1)$, there exist $c_1, c_2, c_3, c_4 > 0$ such that%
	\begin{subequations}\label{eq:W_generic}
		\begin{align}
			&c_1 \norm{\x}^2 \leq W(\x,\iter) \leq c_2\norm{\x}^2\label{eq:W_first_bound_generic}
			\\
			&W(\x + \pr  f(\x,\zeq(\x),\iter),\iter)  -  W(\x,\iter) \leq - c_3\pr\norm{\x}^2\label{eq:W_minus_generic}
			\\
			&|W(\x_1,\iter)-W(\x_2,\iter)|\leq c_4\norm{\x_1-\x_2}\norm{\x_1}
			\notag\\
			&\hspace{.4cm}
			+ c_4\norm{\x_1-\x_2}\norm{\x_2},\label{eq:W_bound_generic}
		\end{align}
	\end{subequations}
	for all $\x, \x_1, \x_2, \x_3 \in \mathcal{D}$, $\iter \in \N$.

	Then, there exist $\bar{\pr} \in (0,\bar{\pr}_1)$, $\kappa_1 >0$, and $\kappa_2 > 0$ such that, for all $\pr \in (0,\bar{\pr})$, it holds
	\begin{align*}
		\norm{\begin{bmatrix}
				\GC{\xt}
                \\
				\zt - \zeq(\xt)
		\end{bmatrix}} \leq \kappa_1\norm{\begin{bmatrix}
				\x^0\\
				\z^0 - \zeq(\x^0)
		\end{bmatrix}} e^{-\kappa_2t},
	\end{align*}
	for all $(\x^0,\z^0) \in \mathcal{D} \times \R^m$.
\end{theorem}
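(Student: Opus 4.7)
The plan is to mimic the classical two-time-scale composite Lyapunov argument of discrete-time singular perturbation and adapt it to the time-varying dynamics~\eqref{eq:interconnected_system_generic}. I would first change coordinates to $\tzt := \zt - \zeq(\xt)$, so that the fast subsystem equilibrium becomes the origin and the boundary-layer decrease condition~\eqref{eq:U_minus_generic} is stated exactly along the dynamics of $\tz$ with $\x$ frozen. Using the fixed-point identity $\zeq(\xt) = g(\zeq(\xt), \xt, \iter)$, the actual $\tz$-dynamics takes the form
\begin{align*}
    \tztp = \bigl[g(\tzt+\zeq(\xt),\xt,\iter)-\zeq(\xt)\bigr] + \bigl[\zeq(\xt)-\zeq(\xtp)\bigr],
\end{align*}
where the first bracket is the boundary-layer update and the second is an $O(\pr)$ perturbation, bounded through the Lipschitz continuity of $\zeq$ and of $f$ together with the identity $\pr f(0,\zeq(0),\iter)=0$, yielding $\norm{\zeq(\xt)-\zeq(\xtp)}\le \pr L_{\zeq} L_f(\norm{\xt}+\norm{\tzt})$.

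The next step is to build the composite Lyapunov function $V(\xt,\tzt,\iter):=W(\xt,\iter)+\theta U(\tzt,\iter)$ with a weight $\theta>0$ to be tuned. For the slow channel, I would add and subtract $\xt+\pr f(\xt,\zeq(\xt),\iter)$ inside $W$ and invoke~\eqref{eq:W_minus_generic} on the reduced piece together with~\eqref{eq:W_bound_generic} on the residual $\pr[f(\xt,\zt,\iter)-f(\xt,\zeq(\xt),\iter)]$, whose norm is bounded by $\pr L_f\norm{\tzt}$; this produces a negative contribution $-c_3\pr\norm{\xt}^2$ plus a cross term of order $\pr\norm{\tzt}\norm{\xt}$. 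For the fast channel, I would similarly split $U(\tztp,\iterp)-U(\tzt,\iter)$ into a \emph{nominal} piece $U(g(\tzt+\zeq(\xt),\xt,\iter)-\zeq(\xt),\iterp)-U(\tzt,\iter)\le -b_3\norm{\tzt}^2$ supplied by~\eqref{eq:U_minus_generic}, and a \emph{perturbation} piece $U(\tztp,\iterp)-U(g(\tzt+\zeq(\xt),\xt,\iter)-\zeq(\xt),\iterp)$, which by~\eqref{eq:U_bound_generic} together with the $O(\pr)$ bound just established is of order $\pr\norm{\tzt}(\norm{\xt}+\norm{\tzt})$.

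Combining these two one-step estimates and applying Young's inequality on every cross term of the form $\pr\norm{\xt}\norm{\tzt}$, I would seek $\theta>0$ and a sufficiently small $\bar{\pr}\in(0,\bar{\pr}_1)$ so that, for every $\pr\in(0,\bar{\pr})$,
\begin{align*}
    V(\xtp,\tztp,\iterp)-V(\xt,\tzt,\iter)\leq -\kappa\pr\bigl(\norm{\xt}^2+\norm{\tzt}^2\bigr),
\end{align*}
for some $\kappa>0$ independent of $\iter$. The two-sided bounds~\eqref{eq:U_first_bound_generic} and~\eqref{eq:W_first_bound_generic} then yield $V\asymp\norm{(\xt,\tzt)}^2$ uniformly in $\iter$, and a standard discrete-time Lyapunov argument converts the one-step decrease into the global exponential bound in the statement with constants $\kappa_1,\kappa_2$ that depend only on the $b_i$, $c_i$, $L_f$, $L_g$, $L_{\zeq}$, $\theta$, and $\bar{\pr}$. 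The main obstacle is precisely the two-stage tuning of $(\theta,\bar{\pr})$: one must first pick $\theta$ large enough that $-\theta b_3\norm{\tzt}^2$ dominates every $\norm{\tzt}^2$-contribution coming from both channels, and then pick $\bar{\pr}$ small enough that the residual $\pr^2$ terms and the leftover cross terms get absorbed by $-c_3\pr\norm{\xt}^2$, all while keeping every constant strictly independent of $\iter$ despite the explicit time dependence of $W$, $U$, $f$, and $g$.
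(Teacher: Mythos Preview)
Your proposal is correct and follows essentially the same composite-Lyapunov route as the paper: change to $\tz$-coordinates, split both $\Delta W$ and $\Delta U$ into a reduced/boundary-layer piece plus an $O(\pr)$ perturbation via the Lipschitz bounds and the identity $f(0,\zeq(0),\iter)=0$, then absorb cross terms for small $\pr$. The only packaging difference is that the paper takes the unweighted $V=W+U$ and handles the cross terms by assembling a $2\times 2$ quadratic form in $(\norm{\xt},\norm{\tzt})$ and checking positive definiteness via Sylvester's criterion, rather than using Young's inequality with a weight~$\theta$; in fact no weight is needed here, since the boundary-layer decrease $-b_3\norm{\tzt}^2$ is of order one while every perturbation and cross term is $O(\pr)$, so your two-stage tuning of $\theta$ is more elaborate than necessary but not incorrect.
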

The proof of Theorem~\ref{th:theorem_generic} is given in Appendix~\ref{sec:proof_generic}.

	\section{\algo/}
	\label{sec:algo_design}

    \GC{

    \subsection{\GC{From average consensus to consensus ADMM}}
	\label{sec:from_avg_to_admm}

    Let $\xit \in \R^\n$ be the estimate of the solution to problem~\eqref{eq:problem} maintained by agent $i$ at iteration $\iter \in \N$.
    Then, we should update it with the twofold purpose of removing (i) the consensus error with respect to the other agents' estimates, and (ii) the optimality error related to problem~\eqref{eq:problem}, namely 
	\begin{align}
		\xitp = \xit + \pr\bigg(\frac{1}{N}\sum_{j=1}^N\xjt - \xit\bigg) - \pr\frac{\step}{N} \sum_{j=1}^N \nabla f_j(\xjt),\label{eq:desired_control_law}
	\end{align}
	where $\pr,\step > 0$ are tuning parameters.
	However, in a distributed setting, agent $i$ cannot access $\frac{1}{N}\sum_{j=1}^N\xjt$ and $\frac{1}{N}\sum_{j=1}^N \nabla f_j(\xjt)$.
	Hence, one may modify~\eqref{eq:desired_control_law} by employing two auxiliary variables $\yit, \sit \in \R^\n$ aimed at reconstructing $\frac{1}{N}\sum_{j=1}^N \xjt$ and $\frac{1}{N}\sum_{j=1}^N \nabla f_j(\xjt)$, respectively, by running average consensus.
    Thus, the overall algorithm would read as
    \begin{subequations}\label{eq:gt-with-consensus}
    \begin{align}
        \xitp &= \xit + \pr(\yit - \xit) - \pr\step\sit
        \label{eq:gt-with-consensus-x}
        \\
        \begin{bmatrix}\yitp
        \\
        \sitp
        \end{bmatrix}&= \sum_{j \in \mathcal{N}_i} w_{ij} \begin{bmatrix}\yjt
        \\
        \sjt
        \end{bmatrix} + \begin{bmatrix}\xitp
        \\
        \nabla f_i(\xitp)
        \end{bmatrix} - \begin{bmatrix}\xit
        \\
        \nabla f_i(\xit)
        \end{bmatrix},
        \label{eq:gt-with-consensus-y}
    \end{align}
    \end{subequations}
    where each weight $w_{ij} \ge 0$ is the $(i,j)$-entry of a weighted adjacency matrix $\cW \in\R^{N\times N}$ matching the graph $\cG$, i.e., $w_{ij} >0$ whenever $(j,i)\in \cE$ and $w_{ij} =0$ otherwise.
    In ideal conditions, it is possible to show that~\eqref{eq:gt-with-consensus} converges to a solution to problem~\eqref{eq:problem}.
    However, the (dynamic) average consensus of~\eqref{eq:gt-with-consensus-y}
    is not suited to more challenging scenarios including asynchronous activation of the agents and communication losses~\cite{hadjicostis_robust_2016,bof_average_2017}.
    Thus, we need to replace average consensus with a robust consensus protocol.

    Different works have explored the use of \emph{push-sum} (or \emph{ratio}) consensus~\cite{bof_multiagent_2019,tian_achieving_2020} for tracking since this protocol can be modified to converge even with asynchrony and packet losses (cf.~\cite{hadjicostis_robust_2016,bof_average_2017}).
    The key aspect of push-sum is to %
    ensure the so-called \emph{mass preservation} even in the event of packet losses by means of additional pairs of variables for each edge.

    In this paper, we apply an alternative consensus protocol to robustly track the global quantities in the presence of asynchrony, packet losses, and inexactnesses:
    \emph{consensus ADMM}.
    Indeed, differently from consensus ADMM, both average consensus and push-sum need mass preservation, which, however, is not verified when inexact communications and computations occur (due to, e.g., quantization).

    Consensus ADMM, similarly to push-sum protocols, requires additional variables associated to each edge to ensure convergence in asynchronous and lossy scenarios.
    Table~\ref{tab:consensus-protocols} summarizes the preceding discussion.
    We conclude this section by noting that, in the presence of critical memory constraints, one may adopt a less dense graph topology contained within the ``original'' one to reduce the memory burden of the scheme.
    For instance, in the ring topology, it holds $\degree_i = 2$ for all $i$.
    \begin{table}[H]
    \centering
    \caption{Comparison of various dynamic consensus protocols.}
    \label{tab:consensus-protocols}
    \begin{tabular}{cccc}
    \hline
    Protocol & Asynch. \& lossy & Inexactness & \# local variables \\
    \hline
    Average & \xmark & \xmark & $1$ \\
    Push-sum & \cmark & \xmark & $4 + 2 d_i$ \\
    ADMM & \cmark & \cmark & $d_i$ \\
    \hline
    \end{tabular}
    \end{table}
    }

    \subsection{\algo/: Algorithm Design and Convergence Properties}

    \GC{
    The previous section showed the need for consensus ADMM to robustly tracking $\frac{1}{N}\sum_{j=1}^N \xjt$ and $\frac{1}{N}\sum_{j=1}^N \nabla f_j(\xjt)$ in a distributed manner.
    The key to applying ADMM for a dynamic consensus problem is to reformulate it as the \emph{online optimization} problem \cite{bastianello2022admm}}%
	\begin{align}\label{eq:cns_as_opt}
		\begin{split}
			\min_{
				\substack{
					(\y_1,\dots,\y_N) \in \R^{N\n}
					\\
					(\s_1,\dots,\s_N) \in \R^{N\n}
				}
			}
			&\sum_{i=1}^N \git(\y_i,\s_i)
			\\
			\text{s.t.:} \: \: &\begin{bmatrix}\y_i\\
				\s_i\end{bmatrix} = \begin{bmatrix}\y_j\\\s_j\end{bmatrix}  \: \forall (i,j)\in \cE,
		\end{split}
	\end{align}
	where, for all $i \in \until{N}$, $\git : \R^{\n} \times \R^{\n} \to \R$ reads as 
	\begin{align*}
		\git(\y_i,\s_i) = \frac{1}{2}\norm{\y_i - \xit}^2 + \frac{1}{2}\norm{\s_i - \nabla f_i(\xit)}^2.
	\end{align*}
	Indeed, if the graph $\cG$ is connected, then the (unique) optimal solution to problem~\eqref{eq:cns_as_opt}, say it $(\ystart,\sstart) \in \R^{2N\n}$, reads as $(\ystart,\sstart) = (\oneNn\frac{1}{N}\sum_{j=1}^N \xjt,\oneNn\frac{1}{N}\sum_{j=1}^N \nabla f_j(\xjt))$~\cite{bastianello2022admm}.
	From this observation, we design the updates of $\yit$ and $\sit$ by resorting to the distributed ADMM proposed in~\cite{bastianello2020asynchronous} (see Appendix~\ref{sec:admm} for a description of this method in a generic framework).
	Hence, each agent $i$ maintains an additional variable $\zijt \in \R^{2\n}$ for each neighbor $j \in \cN_i$ and implements 
	\begin{subequations}
		\begin{align}
			\begin{bmatrix}
				\yit
				\\
				\sit
			\end{bmatrix}
			&= \argmin_{\substack{\y_i \in \R^{\n} 
					\\
					\s_i \in \R^\n
				}
			}
			\Bigg\{\git(\y_i,\s_i) - \begin{bmatrix} \y_i\T& \s_i\T\end{bmatrix} \sum_{j \in \cN_i}\zijt 
			\notag\\
			&\hspace{2cm}+ \frac{\rho \dii}{2}\left(\norm{\y_i}^2 + \norm{\s_i}^2\right)\bigg\}
			\\
			\zijtp &= (1 - \alpha)\zijt + \alpha\left(-\zjit + 2\rho\begin{bmatrix}
				\yjt
				\\
				\sjt
			\end{bmatrix}\right),\label{eq:zij_update_pre}
		\end{align}
	\end{subequations}
	with $\rho > 0$ and $\alpha \in (0,1)$.
	Being $\git$ quadratic, the above updates are equivalent to the closed form
	\begin{subequations}\label{eq:ADMM_update}
		\begin{align}
			\begin{bmatrix}
				\yit
				\\
				\sit
			\end{bmatrix} &= \frac{1}{1+\rho \dii}\left(\begin{bmatrix}\xit\\
				\nabla f_i(\xit)\end{bmatrix} + \sum_{j\in\cN_i} \zijt\right)
			\\
			\zijtp &= (1 - \alpha)\zijt + \alpha\msgjit,\label{eq:zij_update}
		\end{align}
	\end{subequations}
	in which we also introduced $\msgjit \in \R^{2\n}$ to denote the message from agent $j$ needed by agent $i$ to perform~\eqref{eq:zij_update_pre}, namely
	\begin{align}
		\msgjit := -\zjit + 2\rho\begin{bmatrix}
			\yjt 
			\\
			\sjt
		\end{bmatrix},\label{eq:msgjit}
	\end{align}
	for all $i \in \until{N}$ and $j \in \cN_i$.
    \GC{By replacing~\eqref{eq:gt-with-consensus-y}
    with~\eqref{eq:ADMM_update}, we get the whole distributed protocol reported in Algorithm~\ref{algo:algo} that we name \algo/.}
	\begin{algorithm}[H]
		\begin{algorithmic}
			\State \textbf{Initialization}: $\x_i^0 \in \R^{\n}$, \GC{$\z_{ij}^0 \in \R^{2\n}$ for all $j \in \cN_i$.}
			\For{$\iter=0, 1, \dots$}
			\vspace{.1cm}
				\State $\begin{bmatrix}
					\yit
					\\
					\sit
				\end{bmatrix} = \frac{1}{1+\rho \dii}\left(\begin{bmatrix}\xit\\\nabla f_i(\xit)\end{bmatrix} + \sum_{j\in\cN_i} \zijt\right)$
				\vspace{.1cm}
				\State $\xitp = \xit + \pr(\yit - \xit) - \pr\step\sit$
				\For{$j \in \cN_i$}
					\State $\msgijt = -\zijt + 2\rho\begin{bmatrix}
						\yit
						\\
						\sit
					\end{bmatrix}$
					\State transmit $\msgijt$ to $j$ and receive $\msgjit$ to $j$
					\State $\zijtp = (1-\alpha)\zijt + \alpha\msgjit$
				\EndFor
			\EndFor
		\end{algorithmic}
		\caption{\algo/ (Agent $i$)}
		\label{algo:algo}
	\end{algorithm}
	We remark that Algorithm~\ref{algo:algo} can be implemented in a fully-distributed fashion since it only requires neighboring communication and local variables. 
    \GC{As for the memory required by the execution of Algorithm~\ref{algo:algo}, agent $i$ needs to store the solution estimate $\xit \in \R^{\n}$ and the auxiliary variable $\zit := \col(\zijt)_{j\in\cN_i}\in \R^{2\n\degree_i}$ stacking all the variables $\zijt$ and, thus, vectors with an overall size $(1+\degree_i)\n$.}
	The next theorem states the convergence features of Algorithm~\ref{algo:algo}.
	\begin{theorem}\label{th:convergence}
		Consider Algorithm~\ref{algo:algo} and let Assumptions~\ref{ass:network} and~\ref{ass:strong_and_lipschitz} hold.
		Then, there exist $\bar{\pr}, \bar{\step}, c_1, c_2 > 0$ such that, for all $\rho >0$, $\alpha \in (0,1)$, $\pr \in (0,\bar{\pr})$, \GC{$\step \in (0,\tfrac{2\str}{NL^2})$}, $(\x_i^0,\z_i^0) \in \R^\n \times \R^{2\n\degree_i}$, for all $i \in \until{N}$, it holds
		\begin{align*}
			\norm{\xit - \xstar} \leq c_1\exp(-c_2\iter).\eqoprocend
		\end{align*}
	\end{theorem}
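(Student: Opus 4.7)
The plan is to invoke Theorem~\ref{th:theorem_generic} by casting Algorithm~\ref{algo:algo} into the two-time-scale form~\eqref{eq:interconnected_system_generic}. Since $\yit$ and $\sit$ are algebraic outputs of $(\xit,\zit)$, the state reduces to the slow variable $\xt := \col(\xit)_{i\in\until{N}}$, whose update~\eqref{eq:gt-with-consensus-x} is scaled by the small parameter $\pr$, and the fast variable $\zt := \col(\zit)_{i\in\until{N}}$ updated at rate $\alpha$. The dynamics are time-invariant, so the Lipschitz conditions required by Theorem~\ref{th:theorem_generic} follow from Assumption~\ref{ass:strong_and_lipschitz} and the closed-form expressions for $\yit,\sit$.

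I would then compute the fast equilibrium. For any frozen $\x$, the $\zijt$ recursion~\eqref{eq:zij_update} coincides with the distributed ADMM of~\cite{bastianello2020asynchronous} applied to the quadratic consensus problem~\eqref{eq:cns_as_opt} with constant data. Since $\cG$ is connected, this ADMM admits a unique fixed point $\zeq(\x)$ with a linear contraction rate depending on $\alpha$ and $\rho$; Lipschitz continuity of $\zeq$ in $\x$ follows from the $\lip$-Lipschitz continuity of the $\nabla f_i$ together with the affine structure of the ADMM operator. At this equilibrium one has $y_i^{\mathrm{eq}}=\tfrac{1}{N}\sum_{j=1}^N x_j$ and $s_i^{\mathrm{eq}}=\tfrac{1}{N}\sum_{j=1}^N \nabla f_j(x_j)$. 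A boundary-layer Lyapunov function $U$ satisfying~\eqref{eq:U_generic} is then obtained as a weighted quadratic induced by this ADMM contraction, with constants $b_1,\ldots,b_4$ independent of $\pr$.

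A key simplification occurs in the reduced system~\eqref{eq:reduced_system_generic}. Plugging in $\zeq(\xt)$, the slow update reads $\xitp = \xit + \pr(\bxt - \xit) - \tfrac{\pr\step}{N}\sum_{j=1}^N \nabla f_j(\xjt)$ with $\bxt := \tfrac{1}{N}\sum_j \xjt$. Averaging over $i$ gives $\bxtp = \bxt - \tfrac{\pr\step}{N}\sum_j \nabla f_j(\xjt)$, whereas the consensus component $\pxt := \xt - \oneNn\bxt$ satisfies the \emph{exact} recursion $\pxtp=(1-\pr)\pxt$. By the $\lip$-Lipschitz continuity of the gradients, $\tfrac{1}{N}\sum_j \nabla f_j(\xjt) = \tfrac{1}{N}\nabla f(\bxt) + \varepsilon(\xt)$ with $\norm{\varepsilon(\xt)} \leq \tfrac{\lip}{\sqrt{N}}\norm{\pxt}$, so that $\bxt$ undergoes an approximate gradient descent on the $\str$-strongly convex, $N\lip$-smooth $f$ with effective stepsize $\pr\step/N$. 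The natural candidate $W(\x) = a\norm{\bx - \xstar}^2 + b\norm{\px}^2$ satisfies~\eqref{eq:W_generic}: strong convexity yields a term $-2a\tfrac{\pr\step\str}{N}\norm{\bx-\xstar}^2$, the consensus block contributes $-b\pr(2-\pr)\norm{\px}^2$, and the cross term produced by $\varepsilon(\xt)$ is absorbed via Young's inequality provided $\step < 2\str/(N\lip^2)$ and the weights $a,b>0$ are tuned appropriately.

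The main obstacle is the careful balancing of $a$ and $b$ so that the cross term of order $\pr\step\lip\norm{\pxt}\norm{\bxt-\xstar}$ is dominated by the two negative contributions, yielding~\eqref{eq:W_minus_generic} with a strictly positive coefficient $c_3$; the quadratic-in-$\pr$ corrections also have to be absorbed into the bound $\bar{\pr}$. Once both $U$ and $W$ are in place, Theorem~\ref{th:theorem_generic} produces $\bar{\pr}>0$ such that, for all $\pr \in (0,\bar{\pr})$, the cascade $(\xt,\zt-\zeq(\xt))$ is globally exponentially stable, which directly implies the claimed bound $\norm{\xit - \xstar}\le c_1\exp(-c_2\iter)$ for any initial condition.
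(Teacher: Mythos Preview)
Your overall strategy matches the paper exactly: cast Algorithm~\ref{algo:algo} as a singularly perturbed system, verify the hypotheses of Theorem~\ref{th:theorem_generic} by building a quadratic $U$ for the boundary layer and a weighted quadratic $W(\x)=a\norm{\bx-\xstar}^2+b\norm{\px}^2$ for the reduced system (your reduced-system computation and the condition $\step<2\str/(NL^2)$ coincide with Lemma~\ref{lemma:rs_algo}).

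There is, however, a genuine gap in the boundary-layer step. You assert that the ADMM recursion for $\zt$ ``admits a unique fixed point $\zeq(\x)$ with a linear contraction rate,'' and then extract $U$ from that contraction. This is not correct: the aggregate $z$-map is $\ztp=(I-\alpha(I+P-2\rho PA\cH A^\top))\zt+\cdots$, and the matrix $I+P-2\rho PA\cH A^\top$ has a nontrivial kernel (semisimple eigenvalues at $0$), so the transition matrix has eigenvalues at $1$ and is only \emph{non}expansive on the full space. Consequently the fixed point is not unique and no $U$ satisfying~\eqref{eq:U_minus_generic} can exist for the full $\tzt$ dynamics. The paper handles this by decomposing $\z=\B\bz+\M\pz$ where the columns of $\B$ span the kernel; it then shows (Lemma~\ref{lemma:results}) that $\Ax^\top\B=\An^\top\B=0$, so $\bz$ does not influence $\xt$ and stays constant, and that on the complementary $\pz$-subspace the matrix $I-\alpha\F$ with $\F=\M^\top(I+P-2\rho PA\cH A^\top)\M$ is Schur (Lemma~\ref{lemma:F}). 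The boundary-layer Lyapunov function (Lemma~\ref{lemma:bl_algo}) and the equilibrium map $\pzeq(\tx)$ are then defined on this reduced $\pz$-space, where Theorem~\ref{th:theorem_generic} can legitimately be applied. Without this projection your construction of $U$ fails.
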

	Theorem~\ref{th:convergence} is proved in Section~\ref{sec:proof_algo}.
	In detail, with tools from system theory, we will show exponential stability of $(\oneNn\xstar,\z^\star)$ for the aggregate form of \algo/, where the quantity $\z^\star \in \R^{2\n\dd}$ will be specified later.
    \GC{
    \begin{remark}
        \algo/ is characterized by four tuning parameters: $\pr, \step, \alpha, \rho$. 
        Theorem~\ref{th:convergence} ensures linear convergence of \algo/ with any $\alpha \in (0,1)$, $\rho > 0$, and sufficiently small values of $\pr$ and $\step$. 
        In particular, being $\step$ a step-size parameter, we are able to provide an exact bound resembling the one prescribed for (centralized) gradient descent.
        Instead, as it will become more clear in Section~\ref{sec:analysis_algo}, $\pr$ acts as a timescale parameter (see Section~\ref{sec:sp}) and its bound depends on many setup parameters (such as, e.g., network connectivity).
        For practical tuning, it is advisable to start with small values of $\pr$ and increase them as much as possible.
    \end{remark}
    }

	\section{\algo/:\\ Stability Analysis}
	\label{sec:analysis_algo}
	
	Here, we provide the analysis of Algorithm~\ref{algo:algo}. 
	First, in Section~\ref{sec:system reformulation}, we interpret the aggregate form of \algo/ as a \emph{singularly perturbed} system, i.e, the interconnection between a slow and fast subsystem.
	Then, in Section~\ref{sec:bl_algo} and~\ref{sec:rs_algo}, respectively, we separately study the identified subsystems by relying on Lyapunov theory.
	Finally, in Section~\ref{sec:proof_algo}, we use the results collected in the previous steps to prove Theorem~\ref{th:convergence}.
    \GC{Throughout the whole section, the assumptions of Theorem~\ref{th:convergence} hold true.}

	\subsection{\algo/ as a Singularly Perturbed System}
	\label{sec:system reformulation}

	\GC{We start by providing the aggregate formulation of \algo/.}
	To this end, let us introduce the permutation matrix $P \in \R^{2\n\dd \times 2\n\dd}$ that swaps the $ij$-th element with the $ji$-th one, and the matrices $\Ax \in \R^{2\n\dd \times N\n}$, \GC{$\An \in \R^{2\n\dd \times N\n}$}, $A \in \R^{2\n\dd \times N\n}$, $H \in \R^{N\n \times N\n}$, and $\cH \in \R^{2N\n \times 2N\n}$ %
 given by 
	\begin{align*}
			\Ax &:=\begin{bmatrix}
					\one_{\degree_1,\n}
							\\
							0_{\degree_1,\n}
					\\
					&\diagentry{\xddots}
					\\
					&&
					\one_{\degree_N,\n}
					\\
					&&0_{\degree_N,\n}
				\end{bmatrix} 
		\!\!,\hspace{.08cm}
			\GC{\An :=} \begin{bmatrix}
					0_{\degree_1,\n}
					\\
					\one_{\degree_1,\n}
					\\
					&\diagentry{\xddots}
					\\
					&&0_{\degree_N,\n}
					\\		
					&&\one_{\degree_N,\n}
				\end{bmatrix}
				\\
				A &:=  \begin{bmatrix}
					\one_{\degree_1,2\n}
					\\
					&\diagentry{\xddots}
					\\
					&&\one_{\degree_N,2\n}
				\end{bmatrix}
				\\
				H &:= \begin{bmatrix}
					\frac{1}{1+\rho\degree_1}I_{\n}
							\\
							&\diagentry{\xddots}
							\\
							&&
							\frac{1}{1+\rho\degree_N}I_{\n}
				\end{bmatrix}
				\\
				\cH &:= \begin{bmatrix}
					\frac{1}{1+\rho\degree_1}I_{2\n}
							\\
							&\diagentry{\xddots}
							\\
							&&
							\frac{1}{1+\rho\degree_N}I_{2\n}
				\end{bmatrix}.
	\end{align*}
	Then, we define the stacking vectors $\xt := \col(\xt_1,\dots,\xt_N) \in \R^{N\n}$ and $\zt := \col(\zt_1,\dots,\zt_N) \in \R^{2\n\dd}$.
  Then, the aggregate formulation of \algo/ reads as
	\begin{subequations}\label{eq:algo_aggregate_form}
		\begin{align}
			\xtp &= \xt + \pr \left(H\left(\xt + \Ax\T\zt\right) -\xt\right)
			\notag\\
			&\hspace{.4cm}
			- \pr \step H  \left(\G(\xt) + \An\T\zt\right)\label{eq:algob_aggregate_form_x}
			\\
			\ztp &= \zt - \alpha(I + P - 2\rho PA\cH A\T)\zt 
			\notag\\
			&\hspace{.4cm}
			+ 2\alpha\rho PA\cH  \vv(\xt),\label{eq:algob_aggregate_form_z}
		\end{align}
	\end{subequations}
	where we introduced the operators $\G: \R^{N\n} \to \R^{N\n}$ and $\vv: \R^{N\n} \to \R^{2N\n}$ that, given any $\x := \col(\x_1,\dots,\x_N) \in \R^{N\n}$ with $\x_i \in \R^\n$ for all $i \in \until{N}$, are defined as $G(\x) := \col(\nabla f_1(\x_1),\dots,\nabla f_N(\x_N))$ and $\vv(\x) := \col(\x_1,\nabla f_1(\x_1),\dots, \x_N,\nabla f_N(\x_N))$.
	Fig.~\ref{fig:block_diagram} reports a block diagram graphically describing~\eqref{eq:algo_aggregate_form}.
	\begin{figure}[H]
		\includegraphics[width=\columnwidth]{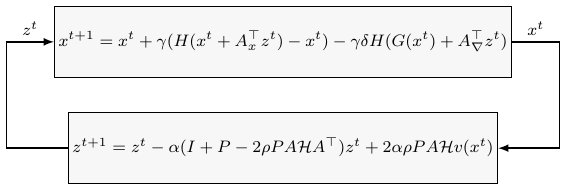}
		\caption{Block diagram representing~\eqref{eq:algo_aggregate_form}.}
		\label{fig:block_diagram}
	\end{figure}
	From the discussion in~\cite{bastianello2020asynchronous}, we deduce that the matrix $(I + P - 2\rho P A\cH A\T)$ has some eigenvalues in $0$ that are all semi-simple.
	Next, we introduce a decomposition to express $\zt$ according to a basis of the subspace corresponding to the kernel of $(I + P - 2\rho P A\cH A\T)$.
	To this end, let $\bb \in \N$ be the dimension of the subspace $\cS$ spanned by the eigenvectors of $(I + P - 2\rho P A\cH A\T)$ associated to $0$, $\B \in \R^{2\n\dd \times \bb}$ be the matrix whose columns represent an orthonormal basis of $\cS$, and $\M \in \R^{2\n\dd \times \m}$ be the matrix such that $\M\T M = I$ and $\B\T \M = 0$, with $\m := 2\n\dd - \bb$.
	The next lemma highlights some useful properties of the matrices $\B$ and $\M$.
	\begin{lemma}\label{lemma:results}
		Consider the matrices $\B$ and $\M$.
		Then, it holds%
		\begin{subequations}\label{eq:results}
		\begin{align}
			\Ax\T \B &= 0, \quad
			\An\T \B = 0\label{eq:results_1}
			\\
			\B\T PA &= 0\label{eq:B_P_A}
			\\
			\B\T (I \! + \! P \! - \! 2\rho PA\cH A\T)&= 0.\label{eq:results_2}
		\end{align}
	\end{subequations}
	\end{lemma}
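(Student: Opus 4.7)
The plan is to reduce all three identities to a single structural description of $\cS$: writing any $\z \in \R^{2\n\dd}$ according to the directed-edge decomposition $\z = \col(\z_{ij})_{(i,j)\in\cE}$ with $\z_{ij}\in \R^{2\n}$, I will show that $\z \in \cS$ if and only if (a) $\z_{ij} = -\z_{ji}$ (antisymmetry) and (b) $\sum_{j\in\cN_i}\z_{ij} = 0$ for all $i$ (zero-outflow). Once this characterization is in place, each of~\eqref{eq:results_1}--\eqref{eq:results_2} reduces to a block-wise check.

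To establish the characterization, I expand $(I + P - 2\rho PA\cH A\T)\z = 0$ block by block, using that $P$ swaps the $ij$-th and $ji$-th $2\n$-blocks, $A\T$ sums over neighbors, and $\cH$ applies the scaling $1/(1+\rho\dii)$. The $ij$-th block of the kernel equation reads $\z_{ij}+\z_{ji} = \tfrac{2\rho}{1+\rho\dji}\sum_{i'\in\cN_j}\z_{ji'} =: c_j$; swapping $i \leftrightarrow j$ forces $c_i = c_j$ whenever $(i,j)\in\cE$, and connectivity (Assumption~\ref{ass:network}) makes all $c_i$ equal to a common constant $c$. A short double counting, summing the defining identity $(1+\rho\dii)c = 2\rho\sum_{j\in\cN_i}\z_{ij}$ over $i$ and using $\z_{ij}+\z_{ji} = c$ to evaluate $\sum_{(i,j)}\z_{ij} = c\dd/2$, yields $Nc = 0$ and therefore $c = 0$; properties (a) and (b) then follow.

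With (a)--(b) in hand, identities~\eqref{eq:results_1} and~\eqref{eq:B_P_A} become one-line computations on any $\z \in \cS$. The definition of $\Ax$ gives $(\Ax\T\z)_i$ as the sum over $j\in\cN_i$ of the top $\n$-blocks of $\z_{ij}$, which vanishes by (b); the argument for $\An$ is identical, yielding~\eqref{eq:results_1}. Similarly, the structure of $A$ gives $(PA\y)_{ij} = \y_j$, so that $\z\T PA\y = \sum_{j=1}^N \bigl(\sum_{i\in\cN_j}\z_{ij}\bigr)\T \y_j$; the inner sum vanishes upon invoking (a) to rewrite $\sum_{i}\z_{ij} = -\sum_{i}\z_{ji}$ and then applying (b) at vertex $j$, giving~\eqref{eq:B_P_A}.

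The main obstacle is~\eqref{eq:results_2}: the matrix $(I + P - 2\rho PA\cH A\T)$ is \emph{not} symmetric, so the fact that the columns of $\B$ span its right kernel (which is how $\cS$ was defined) does not a priori give the left-kernel identity. The plan is to transpose~\eqref{eq:results_2} and, using $P\T=P$ and $\cH\T=\cH$, verify directly that $(I + P - 2\rho A\cH A\T P)\z = 0$ for every $\z \in \cS$. A block computation analogous to the one used to characterize $\cS$ shows that the $ij$-th block of $(I + P - 2\rho A\cH A\T P)\z$ equals $\z_{ij}+\z_{ji} - \tfrac{2\rho}{1+\rho\dii}\sum_{j'\in\cN_i}\z_{j'i}$, which vanishes by (a) together with the identity $\sum_{j'\in\cN_i}\z_{j'i} = -\sum_{j'\in\cN_i}\z_{ij'} = 0$ (combining (a) and (b) at $i$). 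The explicit kernel description is precisely what turns the asymmetry of the operator from a genuine obstruction into a trivial check.
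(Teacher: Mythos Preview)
Your proof is correct and takes a genuinely different route from the paper's. The paper invokes an external result (\cite[Lemma~2]{bastianello2020asynchronous}) to obtain the inclusion $\ker(I+P-2\rho PA\cH A\T)\subset \ker(A\T)$, from which~\eqref{eq:results_1} is immediate; it then observes that this inclusion combined with the kernel equation forces $(I+P)\B=0$, i.e.\ $P\B=-\B$, so that $A\T P\B=-A\T\B=0$ yields~\eqref{eq:B_P_A}, and~\eqref{eq:results_2} follows by combining $(I+P)\B=0$ with~\eqref{eq:B_P_A}. Your approach instead derives a complete block-wise characterization of $\cS$ (antisymmetry $\z_{ij}=-\z_{ji}$ together with zero outflow $\sum_{j\in\cN_i}\z_{ij}=0$), via the connectivity-plus-double-counting argument that forces the common constant $c$ to vanish. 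This is more work up front but is fully self-contained (no external lemma), makes the structure of $\cS$ explicit, and handles the asymmetric left-kernel identity~\eqref{eq:results_2} by a direct block computation rather than by the $P\B=-\B$ trick. Both arguments are short; the paper's is terser because it outsources the key kernel inclusion, while yours trades that citation for an explicit description of the null space that could be independently useful.
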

	\begin{proof}
		By invoking~\cite[Lemma~2]{bastianello2020asynchronous}, we claim that 
		\begin{align}\label{eq:kernels}
			\ker(I + P - 2\rho PA\cH A\T) \subset \ker(A\T).
		\end{align}
		Further, by construction of $\Ax$ and $\An$, the results $\ker(A) \subseteq \ker(\Ax)$ and $\ker(A) \subseteq \ker(\An)$ hold true.
		Thus, the proof of~\eqref{eq:results_1} follows.
		In order to prove~\eqref{eq:B_P_A}, we note that~\eqref{eq:kernels} implies $(I + P)\B = 0$.
		Hence, since $P = P\T$, it holds 
		\begin{align}\label{eq:B_I_P}
			\B\T(I + P) = 0.
		\end{align}
		Further, the result~\eqref{eq:kernels} also implies 
		\begin{align}\label{eq:B_A}
			\B\T A = 0.
		\end{align}
		Moreover, by construction, it holds
		\begin{align}\label{eq:P_A}
			\ker(A\T P) \subseteq \ker (A\T ).
		\end{align}
		The result~\eqref{eq:B_P_A} follows by combining~\eqref{eq:B_A},~\eqref{eq:P_A} and the fact that $P = P\T$.
		Finally, as for~\eqref{eq:results_2}, it follows by combining~\eqref{eq:B_P_A} and~\eqref{eq:B_I_P}.
	\end{proof}
	Now, let us introduce the novel coordinates $\bz \in \R^\bb$ and $\pz \in \R^{\m}$ defined as
	\begin{align}\label{eq:z_perp_z_bar}
		\bz := \B\T\z, \quad \pz := \M\T\z.
	\end{align}
	Then, by invoking Lemma~\ref{lemma:results}, we use the coordinates~\eqref{eq:z_perp_z_bar} to equivalently rewrite system~\eqref{eq:algo_aggregate_form} as
	\begin{subequations}\label{eq:algo_transformed}
		\begin{align}
			\xtp &= \xt + \pr \left(H\left(\xt + \Ax\T\M \pzt\right) -\xt\right)
			\notag\\
			&\hspace{.4cm}
			- \pr \step H  \left(\G(\xt) + \An\T\M \pzt\right)
			\label{eq:algo_transformed_px}
			\\
			\bztp &= \bzt \label{eq:algo_transformed_bz}
			\\
			\pztp &= \pzt - \alpha\M\T (I + P - 2\rho PA\cH A\T)\M \pzt 			\notag\\
			&\hspace{.4cm}
			+ 2\alpha\rho \M\T PA\cH \vv(\xt).
			\label{eq:algo_transformed_pz}
		\end{align}
	\end{subequations}
	Notably, the variable $\bzt$ does not affect the other updates of~\eqref{eq:algo_transformed} (and it holds $\bzt \equiv \bz^0$ for all $\iter\in\N$).
	Thus, by ignoring the variable $\bzt$, and introducing the error coordinate $\txt := \xt - \oneNn\xstar$, system~\eqref{eq:algo_transformed} is equivalent to
	\begin{subequations}\label{eq:sp_system}
		\begin{align}
			\txtp &= \txt + \pr \left(H\left(\xt + \Ax\T\M \pzt\right) -\xt\right)
			\notag\\
			&\hspace{.4cm}
			- \pr \step H  \left(\G(\xt) + \An\T\M \pzt\right)\label{eq:sp_system_slow}
			\\
			\pztp &= \pzt \! - \! \alpha F \pzt 			
			\! + \! 2\alpha\rho \M\T PA\cH \vv(\xt),\label{eq:sp_system_fast}
		\end{align}
	\end{subequations}
	where, with a slight abuse of notation, we used $\xt = \txt + \oneNn\xstar$ and introduced the matrix $\F \in \R^{\m \times \m}$ defined as 
	\begin{align}\label{eq:F}
		\F := \M\T (I + P - 2\rho PA\cH A\T)\M.
	\end{align}
	Next, we provide a crucial property about the matrix $I - \alpha\F$.
	\begin{lemma}\label{lemma:F}
		Consider the matrix $\F$ as defined in~\eqref{eq:F}.
		Then, the matrix $I - \alpha\F$ is Schur for all $\alpha \in (0,1)$.
	\end{lemma}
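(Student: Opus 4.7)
The plan is to interpret $I - \alpha \F$ as an averaged form of the Peaceman--Rachford-type linear operator underlying the $z$-recursion of Algorithm~\ref{algo:algo}, and to leverage its nonexpansiveness. Setting $\mathcal{T} := 2\rho P A \cH A\T - P$, so that $I + P - 2\rho P A \cH A\T = I - \mathcal{T}$ is exactly the matrix in~\eqref{eq:F}, and using $\M\T \M = I$, we obtain
\begin{align*}
    I - \alpha \F \;=\; (1-\alpha)\, I \;+\; \alpha\, \M\T \mathcal{T} \M.
\end{align*}
From the operator-splitting analysis of~\cite{bastianello2020asynchronous}, $\mathcal{T}$ is a linear nonexpansive operator, hence every eigenvalue of $\mathcal{T}$ lies in the closed unit disk of $\mathbb{C}$.

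The next step is to identify the spectrum of $\F$ with the nonzero spectrum of $I - \mathcal{T}$. By Lemma~\ref{lemma:results}, the columns of $\B$ span both the right and the left kernel of $I - \mathcal{T}$, so $\operatorname{range}(I - \mathcal{T}) = (\ker(I - \mathcal{T}))^\perp = \operatorname{span}(\M)$. Combined with the already recalled semi-simplicity of the zero eigenvalue of $I - \mathcal{T}$, this yields an orthogonal $(I - \mathcal{T})$-invariant decomposition $\R^{2\n\dd} = \operatorname{span}(\B) \oplus \operatorname{span}(\M)$. In particular $(I - \mathcal{T}) \M$ lies in $\operatorname{span}(\M)$, so there exists $\tilde{\F}$ with $(I - \mathcal{T})\M = \M \tilde{\F}$; pre-multiplying by $\M\T$ gives $\tilde{\F} = \F$. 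Hence the eigenvalues of $\F$ are exactly the nonzero eigenvalues of $I - \mathcal{T}$, i.e., numbers of the form $1 - \lambda$ with $\lambda \in \operatorname{spec}(\mathcal{T})$ and $\lambda \neq 1$.

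To conclude, any eigenvalue of $I - \alpha \F$ is then of the form $(1-\alpha) + \alpha \lambda$ with $|\lambda| \leq 1$ and $\lambda \neq 1$. The triangle inequality yields
\begin{align*}
    |(1-\alpha) + \alpha \lambda| \;\leq\; (1-\alpha) + \alpha |\lambda| \;\leq\; 1,
\end{align*}
and equality throughout forces $\lambda$ and $1$ to be positive-real aligned on the unit circle, hence $\lambda = 1$, which is excluded. Therefore all eigenvalues of $I - \alpha \F$ lie strictly inside the unit disk for every $\alpha \in (0,1)$, giving the claim.

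The main obstacle is the middle step: because $I - \mathcal{T}$ is non-symmetric, the identity $(I - \mathcal{T})\M = \M \F$ and the ensuing spectral identification are not automatic and depend crucially both on the coincidence of the left and right kernels supplied by Lemma~\ref{lemma:results} and on the semi-simplicity of the zero eigenvalue. Everything else either invokes the averaged-operator machinery of~\cite{bastianello2020asynchronous} to obtain nonexpansiveness of $\mathcal{T}$ or reduces to the short triangle-inequality computation above.
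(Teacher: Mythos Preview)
Your argument is correct and is, in essence, the same one the paper gives, just phrased in operator-theoretic language. The paper proceeds more directly: it observes that the orthogonal change of basis $[\B\;\M]$ block-diagonalizes $I-\alpha(I+P-2\rho PA\cH A\T)$ as $\mathrm{blkdiag}(I,\,I-\alpha\F)$ (using~\eqref{eq:results_2} and the definitions of $\B,\M$), then cites~\cite[Lemma~2]{bastianello2022admm} for the fact that all eigenvalues of the full operator lie in the closed disk of center $1-\alpha$ and radius $\alpha$, so that removing the unit eigenvalues carried by the $\B$-block leaves $I-\alpha\F$ with spectrum strictly inside the unit circle. Your route---showing $(I-\mathcal{T})\M=\M\F$ via the coincidence of left and right kernels from Lemma~\ref{lemma:results} plus semi-simplicity, and then doing the averaged-operator triangle-inequality bound by hand---arrives at the same block structure and the same spectral conclusion; it simply makes explicit what the paper packages into a similarity computation and a citation. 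The extra care you take with invariance and semi-simplicity is exactly what is implicitly used in the paper's claim that the similarity ``isolates the eigenvalues equal to $1$'', so neither approach is more or less rigorous.
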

	\begin{proof}
		By using~\eqref{eq:results_2} and~\eqref{eq:F}, it holds 
		\begin{align*}
			&\begin{bmatrix}
				\B\T 
				\\
				\M\T 
			\end{bmatrix}(I -\alpha(I + P - 2\rho PA\cH A\T))\begin{bmatrix}
				\B
				&
				\M
			\end{bmatrix}
			\\
			&
			= \begin{bmatrix}
				I& 0
				\\
				0& I - \alpha\F
			\end{bmatrix}.
		\end{align*}
		The result~\cite[Lemma~2]{bastianello2022admm} ensures that all the eigenvalues of $I - \alpha(I + P - 2\rho PA\cH A\T)$ are inside the circle on the complex plane with center $1 - \alpha$ and radius $\alpha$.
		Thus, since we isolated the eigenvalues equal to $1$, the proof follows.
	\end{proof}
	We interpret~\eqref{eq:sp_system} as a singularly perturbed system in the form of~\eqref{eq:interconnected_system_generic}, i.e., the interconnection between the slow subsystem~\eqref{eq:sp_system_slow}  and the fast one~\eqref{eq:sp_system_fast}.
	Indeed, the fast scheme~\eqref{eq:sp_system_fast} has an equilibrium parametrized in the slow state through the function $\pzeq: \R^{N\n} \to \R^{\m}$ defined as 
	\begin{align}
		\pzeq(\txt) &:=
		2\rho\F\inv \M\T PA\cH  \vv(\txt + \oneNn\xstar)
		.\label{eq:equilibrium}
	\end{align}
	Moreover, given any $\tx \in \R^{N\n}$ and the corresponding $\x = \tx + \oneNn\xstar$, it is possible to show that 
		\begin{subequations}\label{eq:results_equilibrium}
			\begin{align}
				H \Ax\T \M \pzeq(\tx) &= \frac{\oneNn\oneNn\T}{N}\x - H \x
				\\
				H \An\T \M \pzeq(\tx) &=\frac{\oneNn\oneNn\T}{N}\G(\x) -H \G(\x).
			\end{align}
		\end{subequations}

	\subsection{Boundary Layer System}
	\label{sec:bl_algo}

	In this section, we will study the so-called boundary layer system associated to~\eqref{eq:sp_system}.
	Therefore, we consider an arbitrarily fixed $\txt \equiv \tx \in \R^{N\n}$ for all $\iter \in \N$ and accordingly rewrite~\eqref{eq:sp_system_fast} using the error coordinate $\tpzt := \pzt - \pzeq(\tx)$.
	Hence, the definition of $\pzeq$ (cf.~\eqref{eq:equilibrium}) leads to
	\begin{align}\label{eq:bl_algo}
		\tpztp = \tpzt - \alpha\F\tpzt.
	\end{align}
	The next lemma provides a function $U$ satisfying~\eqref{eq:U_generic} for~\eqref{eq:bl_algo}.
	\begin{lemma}\label{lemma:bl_algo}
		Consider~\eqref{eq:bl_algo}. 
		Then, there exists $U: \R^{\m} \to \R$ such that the conditions~\eqref{eq:U_generic} are satisfied.
	\end{lemma}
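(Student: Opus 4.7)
The plan is to exploit the Schur stability of $I - \alpha \F$ established in Lemma~\ref{lemma:F} to construct a \emph{time-invariant} quadratic Lyapunov function of the form $U(\tz) := \tz\T P \tz$ for a suitably chosen symmetric positive definite matrix $P \in \R^{\m \times \m}$. Since the boundary layer system~\eqref{eq:bl_algo} is linear and autonomous (no dependence on $\xt$ or $\iter$), the function $U$ will not need an explicit time argument, and the bounds in~\eqref{eq:U_generic} will hold uniformly in $\iter$ as an immediate consequence.

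The matrix $P$ is obtained as the unique symmetric positive definite solution to the discrete-time Lyapunov equation $(I - \alpha\F)\T P (I - \alpha\F) - P = -I$, whose existence and positive definiteness are guaranteed precisely by Lemma~\ref{lemma:F}. With this choice, the sandwich bound~\eqref{eq:U_first_bound_generic} is immediate by setting $b_1 := \lambda_{\min}(P) > 0$ and $b_2 := \lambda_{\max}(P) > 0$. To verify the decrease condition~\eqref{eq:U_minus_generic}, I would evaluate $U$ along~\eqref{eq:bl_algo}, noting that in this autonomous setting the term $\zeq(\x)$ entering~\eqref{eq:U_minus_generic} plays no role; a direct computation then yields $U(\tpztp) - U(\tpzt) = \tpzt\T\big[(I-\alpha\F)\T P(I-\alpha\F) - P\big]\tpzt = -\norm{\tpzt}^2$, i.e., the required bound with $b_3 := 1$.

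For the Lipschitz-type condition~\eqref{eq:U_bound_generic}, I would exploit the quadratic identity $\tz_1\T P\tz_1 - \tz_2\T P\tz_2 = (\tz_1 - \tz_2)\T P\tz_1 + \tz_2\T P(\tz_1 - \tz_2)$ and apply Cauchy--Schwarz, which gives $|U(\tz_1) - U(\tz_2)| \leq \norm{P}\,\norm{\tz_1 - \tz_2}\,(\norm{\tz_1} + \norm{\tz_2})$, i.e., the desired inequality with $b_4 := \norm{P}$.

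The crux of the argument is essentially already accomplished by Lemma~\ref{lemma:F}: once Schur stability of $I - \alpha\F$ is in hand, the construction of $P$ via the discrete Lyapunov equation and the verification of~\eqref{eq:U_generic} reduce to standard linear-algebra manipulations. Therefore, I do not anticipate any real technical obstacle in the proof beyond carefully assembling these ingredients; the only subtlety is recognizing that the autonomous nature of~\eqref{eq:bl_algo} allows a time-invariant $U$, which bypasses the more elaborate time-varying Lyapunov constructions required elsewhere in the paper.
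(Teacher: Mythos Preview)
Your proposal is correct and follows essentially the same approach as the paper: both exploit the Schur property of $I - \alpha\F$ (Lemma~\ref{lemma:F}) to solve a discrete Lyapunov equation and take the resulting quadratic form as $U$. The only cosmetic difference is that the paper leaves the right-hand side $\Qtpz$ arbitrary while you fix it to $I$, and you spell out the constants $b_1,\dots,b_4$ explicitly whereas the paper simply invokes that $U$ is quadratic and positive definite.
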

	\begin{proof}
		We recall that the matrix $I - \alpha \F$ is Schur for all $\alpha \in (0,1)$ (cf.~Lemma~\ref{lemma:F}).
		Then, we arbitrarily choose $\Qtpz \in \R^{\m \times \m}$, $\Qtpz = \Qtpz\T > 0$ and, in view of the Schur property of $I - \alpha \F$, we claim that there exists $S_{\tpz} \in \R^{\m \times \m}$, $S_{\tpz} = S_{\tpz}\T > 0$ such that 
		\begin{align}\label{eq:Lyapunov}
			(I - \alpha \F)\T S_{\tpz}(I - \alpha\F) - S_{\tpz}  = - \Qtpz.
		\end{align}
		We then choose the candidate Lyapunov function $U(\tpz) = \tpz\T S_{\tpz}\tpz$.
		Conditions~\eqref{eq:U_first_bound_generic} and~\eqref{eq:U_bound_generic} are satisfied since the chosen $U$ is a quadratic positive definite function, while~\eqref{eq:U_minus_generic} follows by applying~\eqref{eq:Lyapunov}.
	\end{proof}
	
	\subsection{Reduced System}
	\label{sec:rs_algo}

	Now, we study the so-called reduced system associated to~\eqref{eq:sp_system}, i.e., the system obtained by considering $\pzt = \pzeq(\txt)$ into~\eqref{eq:sp_system_slow} for all $\iter\in\N$.
	By using~\eqref{eq:results_equilibrium}, such a system reads as 
	\begin{align}\label{eq:rs_algo}
		\txtp &= \txt - \pr\left(I - \frac{\oneNn\oneNn\T}{N}\right)\txt 
		\notag\\
		&\hspace{.4cm}
		- \pr\step\frac{\oneNn\oneNn\T}{N}\G(\txt + \oneNn\xstar).
	\end{align}
	The next lemma provides a Lyapunov function for system~\eqref{eq:rs_algo} satisfying the conditions required in~\eqref{eq:W_generic}.	
	\begin{lemma}\label{lemma:rs_algo}
		Consider~\eqref{eq:rs_algo}.
		Then, there exists $W: \R^{N\n} \to \R$ such that, for all $\pr \in (0,1)$ and $\step \in (0,2\str/(NL^2))$, $W$ satisfies~\eqref{eq:W_generic}.
	\end{lemma}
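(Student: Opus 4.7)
The plan is to analyze the reduced system~\eqref{eq:rs_algo} by introducing the orthogonal consensus--perpendicular decomposition of the error $\txt$. Defining $\bxt := \tfrac{1}{N}\oneNn\T \txt \in \R^{\n}$ and $\pxt := (I - \tfrac{\oneNn\oneNn\T}{N})\txt$, so that $\txt = \oneNn \bxt + \pxt$, and exploiting the identities $(I - \tfrac{\oneNn\oneNn\T}{N})\oneNn = 0$ and $\oneNn\T(I - \tfrac{\oneNn\oneNn\T}{N}) = 0$, a direct substitution into~\eqref{eq:rs_algo} reveals the cascade structure
\begin{align*}
\pxtp &= (1-\pr)\pxt,
\\
\bxtp &= \bxt - \tfrac{\pr\step}{N}\oneNn\T \G\bigl(\oneNn\bxt + \pxt + \oneNn\xstar\bigr).
\end{align*}
The perpendicular block contracts geometrically with rate $(1-\pr)$, so that $\|\pxtp\|^2 - \|\pxt\|^2 = -\pr(2-\pr)\|\pxt\|^2$ for every $\pr \in (0,1)$.

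The next step is to quantify the mean-block dynamics. Writing $\oneNn\T \G(\oneNn\bxt + \pxt + \oneNn\xstar) = \nabla f(\bxt + \xstar) + \errt$ with $\errt := \sum_{i=1}^N [\nabla f_i(\bxt + \xstar + \pxt_i) - \nabla f_i(\bxt + \xstar)]$, Assumption~\ref{ass:strong_and_lipschitz} combined with Cauchy--Schwarz gives $\|\errt\| \leq L\sqrt{N}\|\pxt\|$. Using $\nabla f(\xstar) = 0$, the $\str$-strong convexity of $f$, and the $NL$-Lipschitz continuity of $\nabla f$, a quadratic expansion yields $\|\bxt - \tfrac{\pr\step}{N}\nabla f(\bxt + \xstar)\|^2 \leq (1 - \tfrac{2\pr\step\str}{N} + \pr^2\step^2 L^2)\|\bxt\|^2 \leq \bigl(1 - (\tfrac{2\str}{N} - \step L^2)\,\pr\step\bigr)\|\bxt\|^2$, which is strict precisely because the assumption $\step < 2\str/(NL^2)$ renders the factor $\tfrac{2\str}{N} - \step L^2$ strictly positive. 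Absorbing the coupling $\errt$ via Young's inequality with parameter of order $\pr\step$ then delivers
\begin{align*}
\|\bxtp\|^2 \leq (1 - c\,\pr\step)\,\|\bxt\|^2 + C_1\,\pr\step \|\pxt\|^2
\end{align*}
for some $c, C_1 > 0$ depending only on $\str$, $L$, $N$, and $\step$.

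The final step is to fuse the two estimates via the composite quadratic Lyapunov candidate $W(\tx) := \|\px\|^2 + K\|\bx\|^2$, equivalently $W(\tx) = \tx\T S \tx$ with $S := (I - \tfrac{\oneNn\oneNn\T}{N}) + \tfrac{K}{N^2}\oneNn\oneNn\T$ positive definite for any weight $K > 0$. Summing the two displayed bounds gives
\begin{align*}
W(\txtp) - W(\txt) \leq -\pr\bigl[(2-\pr) - KC_1\step\bigr]\|\pxt\|^2 - Kc\,\pr\step\,\|\bxt\|^2,
\end{align*}
so picking $K$ small enough (independently of $\pr$, but possibly depending on $\step$) renders both bracketed coefficients strictly negative, and, combined with the identity $\|\txt\|^2 = \|\pxt\|^2 + N\|\bxt\|^2$, produces~\eqref{eq:W_minus_generic} with some $c_3 > 0$. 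The two-sided bound~\eqref{eq:W_first_bound_generic} follows at once from $0 < \lambda_{\min}(S) \leq \lambda_{\max}(S) < \infty$, while the Lipschitz-type estimate~\eqref{eq:W_bound_generic} is a direct consequence of the polarization identity $W(\x_1) - W(\x_2) = (\x_1 - \x_2)\T S(\x_1 + \x_2)$ with $c_4 = \|S\|$. The main obstacle I anticipate is the joint calibration of the Young's inequality parameter and the weight $K$: the contraction margin $\tfrac{2\str}{N} - \step L^2$ degenerates as $\step$ approaches the boundary of its admissible interval, while the Young-induced coupling constant $C_1$ inflates correspondingly, so some care is needed to keep both negative coefficients uniformly positive across all admissible $\pr$ and $\step$.
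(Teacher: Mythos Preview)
Your proposal is correct and follows essentially the same route as the paper: both proofs split $\txt$ into its consensus component $\bxt$ and its disagreement component $\pxt$, observe that~\eqref{eq:rs_algo} becomes the cascade $\pxtp=(1-\pr)\pxt$, $\bxtp=\bxt-\tfrac{\pr\step}{N}\nabla f(\bxt+\xstar)+\text{(coupling)}$, and then build a weighted quadratic Lyapunov function whose decrease is verified via strong convexity, Lipschitz gradients, and a $2\times 2$ positivity argument. The only cosmetic differences are that the paper represents $\pxt$ through an orthonormal complement matrix $R$ (so $\pxt\in\R^{(N-1)n}$ rather than the projected vector in $\R^{Nn}$), weights the \emph{perpendicular} block by a large constant $m$ instead of weighting the \emph{mean} block by your small $K$, and handles the cross term via the Sylvester criterion rather than Young's inequality---all equivalent choices, and your worry about the degeneration near $\step=2\str/(NL^2)$ is the same phenomenon the paper absorbs by letting the weight depend on $\step$.
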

	\begin{proof}
	We start by introducing an additional change of variables to isolate (i) the optimality error and (ii) the consensus error related to the vector $\txt$.
	To this end, let $\rr \in \R^{N\n \times (N-1)\n}$ be the matrix such that its columns span the space orthogonal to that of $\oneNn$, that is, such that $\rr\T\oneNn = 0$ and $\rr\T\rr = I_{(N-1)\n}$.
	Then, let us introduce the new coordinates $\bx \in \R^{\n}$ and $\px \in \R^{(N-1)\n}$ defined as 
	\begin{align}\label{eq:change_x}
		\bx := \frac{\oneNn\T}{N}\tx, \quad \px := \rr\T \tx.
	\end{align}
	By using the coordinates~\eqref{eq:change_x}, $\oneNn \in \ker (I_{N\n} - \tfrac{\oneNn\oneNn\T}{N})$, and $\rr\T\tfrac{\oneNn\oneNn\T}{N} = 0$, we rewrite~\eqref{eq:rs_algo} as the cascade system
	\begin{subequations}\label{eq:rs_algo_explicit}
		\begin{align}
			\mutp &= \mut - \pr\GC{\step}\frac{\oneNn\T}{N}\G(\oneNn\mut + \rr\pxt + \oneNn\xstar)\label{eq:rs_algo_mu}
			\\
			\pxtp &= (1-\pr)\pxt.\label{eq:rs_algo_px}
		\end{align}
	\end{subequations}
			For the sake of compactness, let us introduce $\tilde{\G} : \R^\n \times \R^{(N-1)\n} \to \R^\n$ defined as
			\begin{align}
					\tilde{\G}\left(\bx,\px\right) &:= -\frac{\oneNn\T}{N}\G(\oneNn\bx + \rr\px + \oneNn\xstar) 
					\notag\\
					&\hspace{.4cm}
					+  \frac{\oneNn\T}{N}\G(\oneNn\bx + \oneNn\xstar).\label{eq:deltaG}
			\end{align}
			By using this notation and adding and subtracting $\frac{\pr\step}{N} \nabla f(\mut + \xstar)$ into~\eqref{eq:rs_algo_mu}, we compactly rewrite system~\eqref{eq:rs_algo_explicit} as
			\begin{subequations}\label{eq:sp_system_comapact}
				\begin{align}
					\mutp &= \mut - \frac{\pr\step}{N}\nabla f(\mut+\xstar) + \pr\step\tilde{\G}\left(\mut,\pxt\right) 
					\label{eq:sp_system_comapact_mu}
					\\
					\pxtp &= (1-\pr)\pxt.\label{eq:sp_system_comapact_px}
				\end{align}
			\end{subequations}
			Once this formulation is available, we consider the candidate Lyapunov function $W: \R^{N\n} \to \R$ defined as
			\begin{align}\label{eq:W_rs_algo}
				W(\tx) &:= 
				 \frac{1}{2}\tx\T \frac{\oneNn\oneNn\T}{N^2}\tx + \frac{m}{2}\tx\T \rr\rr\T\tx
				\notag\\
				&=\underbrace{\frac{1}{2}\norm{\bx}^2}_{:=\Vmu(\bx)} + \underbrace{\frac{m}{2}\norm{\px}^2}_{:=\Vpx(\px)},
			\end{align}
			where $m > 0$ will be fixed later.
			Being $W$ quadratic and positive definite, the conditions~\eqref{eq:W_first_bound_generic} and~\eqref{eq:W_bound_generic} are satisfied.
			To check condition~\eqref{eq:W_minus_generic}, we write $\Delta \Vmu(\mut) := \Vmu(\mutp) - \Vmu(\mut)$ along the trajectories of~\eqref{eq:sp_system_comapact_mu}, thus obtaining
			\begin{align}
				\Delta \Vmu(\mut) 
				&= -\frac{\pr\step}{N} (\mut)\T  \nabla f(\mut +\xstar)
				+\pr\step (\mut)\T \tilde{\G}(\mut,\pxt)
				\notag\\
				&\hspace{.4cm}
				+ \frac{\pr^2\step^2}{N} \nabla f(\mut +\xstar)\T \tilde{\G}(\mut,\pxt)
				\notag\\
				&
				\hspace{.4cm}
				+\frac{\pr^2\step^2}{2N^2} \norm{\nabla f(\mut +\xstar)}^2
				+\frac{\pr^2\step^2}{2}\norm{\tilde{\G}(\mut,\pxt)}^2\!\!.
				\label{eq:DeltaV_mu}
			\end{align}
			Since $\xstar$ is  the (unique) solution to problem~\eqref{eq:problem}, it holds $\nabla f(\xstar) = 0$ which allows us to write
			\begin{align}
				-(\mut)\T  \nabla f(\mut + \xstar) &= -(\mut)\T \left(\nabla f(\mut + \xstar)  - \nabla f(\xstar)\right)
				\notag\\
				&\stackrel{(a)}{\leq} 
				-\str\norm{\mut}^2,\label{eq:strong_convexity}
			\end{align}
			where in $(a)$ we use the $\str$-strong convexity of $f$ (cf. Assumption~\ref{ass:strong_and_lipschitz}).
			Then, since $\nabla f(\xstar) = 0$ and the gradients $\nabla f_i$ are Lipschitz continuous (cf. Assumption~\ref{ass:strong_and_lipschitz}), we get
			\begin{align}
				\norm{\nabla f(\mut + \xstar)} 
				&=  \norm{\nabla f(\mut + \xstar) - \nabla f(\xstar)}
				\notag\\
				&
				\stackrel{(a)}{\leq} L\GC{N}\norm{\mut}.\label{eq:bound_nabla_f}
			\end{align}
			Then, using the Lipschitz continuity of the gradients $\nabla f_i$ (cf. Assumption~\ref{ass:strong_and_lipschitz}) and $\norm{\rr} = 1$, we write the bound
			\begin{align}
				\norm{\tilde{\G}(\mut,\pxt)} \leq \tfrac{L}{\sqrt{N}}\norm{\pxt}.\label{eq:bound_delta_G}
			\end{align}
			\GC{Let us introduce $k_1 := L/\sqrt{N}$.}
   Hence, by using~\eqref{eq:strong_convexity},~\eqref{eq:bound_nabla_f}, and~\eqref{eq:bound_delta_G}, we bound~\eqref{eq:DeltaV_mu} as
			\begin{align}
				\Delta \Vmu(\mut) &\leq -\frac{\pr\step}{N} \str\norm{\mut}^2 
				+ \pr\step \GC{k_1}\norm{\mut}\norm{\pxt}
				\notag\\
				&\hspace{.4cm}
				+ \pr^2\step^2 \GC{k_1L}\norm{\mut}\norm{\pxt}
				+ \pr^2\step^2 \frac{L^2}{2}\norm{\mut}^2
				\notag\\
				&\hspace{.4cm}
				+ \pr^2\step^2 \frac{k_1^2}{2}\norm{\pxt}^2.
				\label{eq:DeltaV_mu_second}
			\end{align}
			Along the trajectories~\eqref{eq:sp_system_comapact_px}, we bound $\Delta \Vpx(\pxt) : = \Vpx(\pxtp) - \Vpx(\pxt)$ as
			\begin{align}
				\Delta \Vpx(\pxt) &\leq -m\pr\left(1 - \frac{\pr}{2}\right)\norm{\pxt}^2,\label{eq:DeltaV_px}
			\end{align} 
			Then, by defining $\Delta W(\txt) := \Delta \Vmu(\mut) + \Delta \Vpx(\pxt)$ and using~\eqref{eq:DeltaV_mu_second} and~\eqref{eq:DeltaV_px}, we get 
			\begin{align}
				\Delta W(\txt) 
				&\leq - \pr \begin{bmatrix}
					\norm{\mut}
					\\
					\norm{\pxt}
				\end{bmatrix}\T Q
				\begin{bmatrix}
					\norm{\mut}
					\\
					\norm{\pxt}
				\end{bmatrix} 
				,\label{eq:V_muV_px}	
			\end{align}
			where we introduced the matrix $Q = Q\T \in \R^{2\times 2}$ given by
			\begin{align*}
				Q := \begin{bmatrix}
					\step\str/N - \pr\step^2 L^2/2& -\step(\GC{k_1} + \pr\step \GC{k_1L})/2
					\\
					-\step(\GC{k_1} + \pr\step \GC{k_1L})/2& m(1 - \pr/2) - \pr\step^2 k_1^2/2
				\end{bmatrix}
			\end{align*}
			By Sylvester Criterion, it holds $Q > 0$ if and only if
			\begin{align}\label{eq:determinant_positive}
				\begin{cases}
					\step\str/N - \pr\step^2 L^2/2 > 0
					\\
					m > \bar{m}(\pr,\step),
				\end{cases}
			\end{align}
			where $\bar{m}(\pr,\step)$ is defined as
			\begin{align*}
				\bar{m}(\pr,\step) := \frac{\pr^2\step k_1^2/2(\str/N - \pr\step L^2/2) + \step(\GC{k_1} + \pr\step \GC{k_1L})^2/4}{\pr(\str/N - \pr\step L^2/2)(1 - \pr/2)}.
			\end{align*}
			Hence, we pick $\step \in (0,\tfrac{2\str}{NL^2})$, and, finally, choose $m > \bar{m}(\pr,\step)$.
			In this way,  since $\pr \in (0,1)$, both conditions~\eqref{eq:determinant_positive} are satisfied.
			Thus, we use $q > 0$ to denote the smallest eigenvalue of $Q$ and bound~\eqref{eq:V_muV_px} as 
			\begin{align*}
				\Delta W(\txt) 
				&\leq - \pr q(\norm{\mut}^2 
				+ \norm{\pxt}^2)
				\\
				&\leq -\pr q \tx\T \left(\frac{\oneNn\oneNn\T}{N^2} + \rr\rr\T\right)\txt
				,%
			\end{align*}
			which ensures that also condition~\eqref{eq:W_minus_generic} is satisfied since $\frac{\oneNn\oneNn\T}{N^2} + \rr\rr\T > 0$.
		\end{proof}

		\subsection{Proof of Theorem~\ref{th:convergence}}
		\label{sec:proof_algo}

		The proof of Theorem~\ref{th:convergence} is based on the exploitation of Theorem~\ref{th:theorem_generic}.
		In order to apply such a result, we need to (i) provide a function $U$ satisfying the conditions~\eqref{eq:U_generic} when applied to system~\eqref{eq:bl_algo}, (ii) provide a function $W$ satisfying the conditions~\eqref{eq:W_generic} when system~\eqref{eq:rs_algo} is considered, and (iii) the Lipschitz continuity of the dynamics of~\eqref{eq:sp_system} and $\pzeq(\cdot)$ (cf.~\eqref{eq:equilibrium}).
		As for points (i) and (ii), we invoke Lemma~\ref{lemma:bl_algo} and Lemma~\ref{lemma:rs_algo}, respectively, to claim that, for all $\alpha \in (0,1)$, $\pr \in (0,1)$, and $\step \in (0,2\str/( NL^2))$, these points are satisfied.
		Finally, the Lipschitz continuity of the gradients of the objective functions (cf. Assumption~\ref{ass:strong_and_lipschitz}) allows us to claim that point (iii) is satisfied too.
		Therefore, we can apply Theorem~\ref{th:theorem_generic} which allows us to guarantee that there exists $\bar{\pr} > 0$ such that, for all $\pr \in (0,\bar{\pr})$, the point $(0,\pzeq(0))$ is globally exponentially stable for system~\eqref{eq:sp_system}.
		The proof follows by turning out to the original coordinates $(\x,\z)$.

			\section{Asynchronous and Lossy Networks}
			\label{sec:asynchronous_and_lossy}

			In this section, we study the case with imperfect networks.
			Specifically, we consider the case in which the agents are (possibly) asynchronous and communicate with (possible) packet losses.
			More formally, for all $i \in \until{N}$, \GC{we introduce 
   an unknown deterministic sequence $\{\lit \in \{0,1\}\}_{\iter\in\N}$} modeling the fact that agent $i$ is active or not in the %
   sense
			\begin{align*}
				\lit  &= 1  \implies\text{$i$ updates and transmits variables at $\iter$}
				\\
				\lit &= 0 \implies  \text{$i$ does not update and transmit variables at $\iter$}.
			\end{align*}
			Further, for each pair $(i,j) \in \cE$, let \GC{$\{\bijt \in \{0,1\}\}_{\iter\in \N}$ be 
   the unknown deterministic sequence}
   modeling the packet losses in the sense
    \begin{align*}
        \bijt &= 1
         \implies \text{$i$ receives message $\msgjit$ from $j$ at $\iter$}
        \\
        \bijt &= 0
         \implies \text{$i$ does not receive message $\msgjit$ from $j$ at $\iter$},
    \end{align*}
    in which $\msgijt$ has the same meaning as in~\eqref{eq:msgjit}.
    \begin{remark}
        We note that each variable $\bijt$ depends on variable $\ljt$.
        Indeed, the fact that agent $j$ is active at iteration $\iter$ is a necessary (but not sufficient) condition to ensure that agent $i$ successfully receives a message from agent $j$.\oprocend
    \end{remark}
    \GC{We also introduce the variable $\psiijt := \lit\bijt$ for each pair $(i,j) \in \cE$.
    In light of the above definitions, $\psiijt = 1$ means that agent $i$ is active and receives the message $\msgjit$ from agent $j$ at iteration $\iter$.}
   \GC{
   The next assumption enforces an \emph{essentially cyclic} behavior for the variables $\lit$ and $\psiijt$.
   \begin{assumption}[Network Imperfections]\label{ass:imperfections}
				There exists $\Tmax \in \N$ such that, for all $\iter \in \N$, $i \in \until{N}$, and $j \in \cN_i$, there exist $\tau_1,\tau_2 \in [\iter+1,\iter+\Tmax]$ such that $\lambda_i(\tau_1) = \psi_{ij}(\tau_2) = 1$.
                Moreover, for all $i \in \until{N}$, the limit $\lim_{T\to\infty}\frac{1}{T}\sum_{\initer=\iter+1}^T\lita$ exists and is finite uniformly in $\iter$.                
                \oprocend
			\end{assumption}
   }

			\subsection{\ralgo/: Algorithm Design and Convergence Properties}

			To address the challenging framework described above, we propose a slightly different version of Algorithm~\ref{algo:algo} that we call \ralgo/ and report in Algorithm~\ref{algo:ralgo}.
			\begin{algorithm}[H]
				\begin{algorithmic}
					\State \textbf{Initialization}: $\x_i^0 \in \R^{\n}, \z_{ij}^0 \in \R^{2\n} \hspace{.1cm} \forall j \in \cN_i$.
					\For{$\iter=0, 1, \dots$}
					\If{active}
					\vspace{.1cm}
					\State $\begin{bmatrix}
						\yit
						\\
						\sit
					\end{bmatrix} = \frac{1}{1+\rho \dii}\left(\begin{bmatrix}\xit\\
						\nabla f_i(\xit)\end{bmatrix} + \sum_{j\in\cN_i} \zijt\right)$
						\vspace{.1cm}
					\State $\xitp = \xit + \pr\left(\yit - \xit\right)- \pr\step\sit$
						\For{$j \in \cN_i$}
									\State $\msgijt = -\zijt + 2\rho\begin{bmatrix}
										\yit
										\\
										\sit
									\end{bmatrix}$
									\State transmit $\msgijt$ to $j$ and receive $\msgjit$ to $j$
									\If{$\msgjit$ is received}
										\State $\zijtp = (1-\alpha)\zijt + \alpha\msgjit$
									\EndIf
						\EndFor
					\EndIf
					\EndFor
				\end{algorithmic}
				\caption{\ralgo/ (Agent $i$)}
				\label{algo:ralgo}
			\end{algorithm}
			In Algorithm~\ref{algo:ralgo}, we note that each variable $\zijt$ is updated only if, at iteration $\iter$, the message from agent $j$ has been received by agent $i$.
			In other words, the update~\eqref{eq:zij_update} is performed only if $\bijt =1$.
			Further, as one may expect from the above discussion, the additional condition to perform such an update is that agent $i$ is active at iteration $\iter$.
			The next theorem assesses the convergence properties of Algorithm~\ref{algo:ralgo}.
			\begin{theorem}\label{th:convergence_ralgo}
				Consider Algorithm~\ref{algo:ralgo} and let Assumptions~\ref{ass:network},~\ref{ass:strong_and_lipschitz}, and~\ref{ass:imperfections} hold.
				Then, there exist $\bar{\pr}_{\text{R}}, a_3, a_4 > 0$ such that, for all $\rho > 0$, \GC{$\alpha \in (0,1)$}, $\pr \in (0,\bar{\pr}_{\text{R}})$, $\GC{\step \in (0,4\str/( NL^2))}$, $(\x_i^0,\z_i^0) \in \R^\n \times \R^{2\n\degree_i}$, for all $i \in \until{N}$, it holds
				\begin{align*}
					\norm{\xit - \xstar} \leq a_3\exp(-a_4\iter).\eqoprocend
				\end{align*}
			\end{theorem}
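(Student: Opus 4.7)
The plan is to extend the singular-perturbation argument used for Theorem~\ref{th:convergence} to the time-varying setting, combining it with averaging theory to cope with the indicators $\lit$ and $\psiijt$. First, I would derive the aggregate form of Algorithm~\ref{algo:ralgo} by introducing the diagonal selection matrices $\Lt := \diag(\lit I_\n)_{i \in \until{N}}$ and $\Psit$ (built analogously from $\psiijt$), yielding
\begin{align*}
\xtp &= \xt + \pr \Lt \Big( H(\xt + \Ax\T\zt) - \xt - \step H(\G(\xt) + \An\T\zt) \Big), \\
\ztp &= \zt - \alpha \Psit (I + P - 2\rho PA\cH A\T)\zt + 2\alpha\rho \Psit PA\cH \vv(\xt),
\end{align*}
which is the time-varying counterpart of~\eqref{eq:algo_aggregate_form}. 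Since the identities of Lemma~\ref{lemma:results} are purely algebraic, the same change of coordinates based on $B$ and $M$ applies here, transforming the above into a time-varying singularly perturbed system in $(\txt,\pzt)$ that matches the template~\eqref{eq:interconnected_system_generic} of Theorem~\ref{th:theorem_generic}.

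Next, I would leverage Assumption~\ref{ass:imperfections} to bring in averaging. The essentially cyclic property guarantees that, over every window of length $\Tmax$, every agent activates and every message is delivered at least once, so that the time averages $\Eli$ (and the analogous averages for $\psiijt$) exist and are strictly positive. This lets me define the averaged selection matrices $\EL$ and $\Psi\av$ and the associated averaged fast and slow subsystems, replacing $\Lt$ and $\Psit$ in the transformed equations. The averaged fast dynamics are driven by $I - \alpha \M\T \Psi\av (I + P - 2\rho PA\cH A\T)\M$, and my first task would be to show that this matrix is Schur for all $\alpha \in (0,1)$, by adapting the proof of Lemma~\ref{lemma:F}: the core argument is that weighting the edges by the strictly positive averages does not create new kernel directions beyond those annihilated by $B$.

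Having established a Schur averaged boundary-layer matrix, I would construct Lyapunov functions $U$ and $W$ for the averaged boundary-layer and averaged reduced systems by mimicking Lemmas~\ref{lemma:bl_algo} and~\ref{lemma:rs_algo}. For $W$, the structure~\eqref{eq:W_rs_algo} still applies, but the contraction rate involves $\min_i \Eli > 0$; this modifies the admissible step-size interval and accounts for the adjusted bound $\step \in (0, 4\str/(NL^2))$ in the statement. Both $U$ and $W$ would then satisfy~\eqref{eq:U_generic} and~\eqref{eq:W_generic} uniformly in $\iter$, as required by Theorem~\ref{th:theorem_generic}, and the Lipschitz-continuity hypotheses follow from Assumption~\ref{ass:strong_and_lipschitz}.

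The main obstacle will be controlling the discrepancy between the actual time-varying trajectories and those of the averaged systems, so that the Lyapunov-decrease estimates transfer from the average to the true dynamics. I would address this by grouping iterations into windows of length $\Tmax$ and writing each window's update as the averaged one plus a residual perturbation whose effect telescopes over the window; the strict negative definiteness of the matrix $Q$ from Lemma~\ref{lemma:rs_algo} then provides a margin large enough to absorb the residuals for $\pr$ sufficiently small. Once the hypotheses of Theorem~\ref{th:theorem_generic} are verified for the time-varying system, global exponential stability of $(0,\pzeq(0))$ in the transformed coordinates follows, and reverting to the original coordinates yields the claimed bound $\norm{\xit - \xstar} \leq a_3 \exp(-a_4\iter)$.
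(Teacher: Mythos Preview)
Your overall singular-perturbation-plus-averaging strategy matches the paper's, and the slow-subsystem part is essentially right: the paper also applies averaging to the reduced system and then invokes~\cite[Prop.~7.3]{bof2018lyapunov} to obtain the time-varying Lyapunov function $W$. One minor divergence there is that the paper does \emph{not} reuse the structure~\eqref{eq:W_rs_algo}; instead it takes the weighted function $\Wa(\txa)=\sum_i \tfrac{1}{2\Eli}\norm{\txai}^2$, which makes the heterogeneous averages $\Eli$ cancel in $\nabla\Wa^\top\ra$ and yields the clean bound $\step\in(0,4\str/(NL^2))$ directly. Your proposal to keep the old $W$ and absorb the heterogeneity through $\min_i\Eli$ would work in spirit but is messier, since $\EL$ does not commute with the $(\bar x,x_\perp)$ decomposition used in Lemma~\ref{lemma:rs_algo}.

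The substantive gap is in the boundary-layer step. You propose to average the fast dynamics as well, show that $I-\alpha\M\T\Psi\av(I+P-2\rho PA\cH A\T)\M$ is Schur, and build $U$ from that. But averaging theorems of the type~\cite[Prop.~7.3]{bof2018lyapunov} need the system in the form $\xi^{t+1}=\xi^t+\pr h(\xi^t,t)$ with a \emph{small} parameter $\pr$; the boundary layer $\tpztp=(I-\alpha\hFt)\tpzt$ has no such parameter, since $\alpha$ is allowed to range over all of $(0,1)$. The Schur property of the averaged matrix therefore tells you nothing about the time-varying product $\prod_\tau(I-\alpha\hF(\tau))$, and your windowing sketch in the last paragraph, if made precise, would have to establish contraction of that product directly---at which point the averaged matrix is irrelevant. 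This is exactly what the paper does in Lemma~\ref{lemma:bl_ralgo}: it bounds $\bigl\|\prod_{\tau=t_0}^{t}(I-\alpha\hF(\tau))\bigr\|$ by decomposing along the eigenbasis of the ideal operator $T$ and using the essentially-cyclic property of Assumption~\ref{ass:imperfections} to guarantee that, over any window of length $\Tmax$, every coordinate is contracted at least once by some $|\nu_i|<1$. Global exponential stability of the time-varying boundary layer then follows, and $U$ satisfying~\eqref{eq:U_generic} is obtained a posteriori via the Converse Lyapunov Theorem~\cite[Th.~5.8]{bof2018lyapunov}, not by constructing it for an averaged system.
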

			The proof of Theorem~\ref{th:convergence_ralgo} is provided in Section~\ref{sec:proof_ralgo}.
			More in detail, the proof takes on (i) a singular perturbation interpretation to handle the interaction between the variables $\x_i$ and $\z_i$, and (ii) averaging theory to overcome the imperfections of the networks formalized in Assumption~\ref{ass:imperfections}.
			\begin{remark}
				We remark that Assumption~\ref{ass:imperfections} models the scenario of time-varying networks (with or without packet losses) as a special case.
				Indeed, one may interpret $\bijt$ as the \GC{variable modeling the fact} that edge $(i,j)$ exists at iteration $\iter$.
				From this perspective, Assumption~\ref{ass:imperfections}, combined with Assumption~\ref{ass:network}, guarantees that the time-varying graph has some connectivity-like property on time windows of length $\Tmax$.
				Therefore, Theorem~\ref{th:convergence_ralgo} guarantees the linear convergence for \ralgo/ also in the case of time-varying networks. \oprocend
			\end{remark}

			\section{\ralgo/: Stability Analysis}
			\label{sec:analysis_ralgo}

			Here, we provide the analysis of \ralgo/. 
			First, in Section~\ref{sec:ralgo_reformulation}, we interpret the aggregate form of \ralgo/ as a \emph{singularly perturbed} system.
			Then, in Section~\ref{sec:bl_ralgo} and~\ref{sec:rs_ralgo}, respectively, we separately study the identified subsystems by relying on Lyapunov and averaging theory.
			Finally, in Section~\ref{sec:proof_ralgo}, we use the results collected in the previous steps to prove Theorem~\ref{th:convergence_ralgo}.
            \GC{Throughout the whole section, the assumptions of Theorem~\ref{th:convergence_ralgo} hold true.}
			
			\subsection{\ralgo/ as a Singularly Perturbed System}
			\label{sec:ralgo_reformulation}

            We compactly rewrite the local update of \ralgo/ as
			\begin{subequations}\label{eq:ralgo_aggregate}
				\begin{align}
					\xtp &= \xt + \pr \Lt\left(H\left(\xt + \Ax\T\zt\right) -\xt\right)
					\notag\\
					&\hspace{.4cm}
					- \pr \step\Lt H  \left(\G(\xt) + \An\T\zt\right)\label{eq:ralgo_aggregate_x}
					\\
					\ztp &= \zt \! - \! \alpha\Psit\left((I \! + \! P)\zt \! - \! 2\rho PA\cH (A\T\zt \! + \! \vv(\xt))\right)\!,\label{eq:ralgo_aggregate_z}
				\end{align}
			\end{subequations}
			where $\xt$, $\zt$, $H$, $\Ax$, $\An$, $P$, and $\vv$ have the same meaning as in~\eqref{eq:algo_aggregate_form} and we further introduced the matrices $\Lt := \blkdiag(\lambda_1(\iter) I_\n,\dots,\lambda_N(\iter) I_{\n}) \in \R^{N\n \times N\n}$ and $\Psit := \blkdiag(\Psi_1(\iter)I_{2\n d_1},\dots,\Psi_N(\iter)I_{2\n d_N}) \in \R^{2\n\dd}$ in which $\Psiit := \blkdiag\left(\col(\psiijt)_{j\in \cN_i} \otimes I_{2\n}\right)\in \R^{2\n\dii \times 2\n\dii}$ for all $i \in \until{N}$.
			Fig.~\ref{fig:block_diagram_ralgo} reports a block diagram graphically describing~\eqref{eq:ralgo_aggregate}.
			\begin{figure}[H]
				\includegraphics[width=\columnwidth]{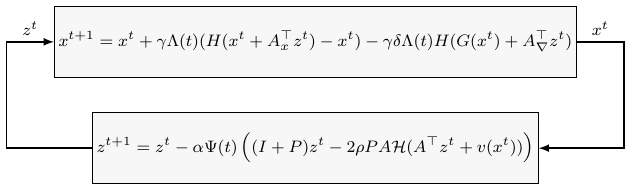}
				\caption{Block diagram representing~\eqref{eq:ralgo_aggregate}.}
				\label{fig:block_diagram_ralgo}
			\end{figure}
			By using the change of variables~\eqref{eq:z_perp_z_bar} and Lemma~\ref{lemma:results}, we rewrite~\eqref{eq:ralgo_aggregate} as 
			\begin{subequations}\label{eq:ralgo_transformed_with_bz}
				\begin{align}
					\xtp &= \xt + \pr \Lt\left(H\left(\xt + \Ax\T\M\T\pzt\right) -\xt\right)
					\notag\\
					&\hspace{.4cm}
					- \pr\step\Lt H  \left(\G(\xt) + \An\T\M\T\pzt\right)\label{eq:ralgo_transformed_with_bz_x}
					\\
					\bztp &= \bzt - \alpha\B\T\Psit\left(I + P - 2\rho PA\cH A\T\right)\M\pzt
					\notag
					\\
					&\hspace{.4cm}
					+ \alpha 2\rho\B\T\Psit PA\cH \vv(\xt)					
					\label{eq:ralgo_transformed_with_bz_bz}
					\\
					\pztp &= \pzt - \alpha\M\T\Psit\left(I + P - 2\rho PA\cH A\T\right)\M\pzt
					\notag
					\\
					&\hspace{.4cm}
					+ \alpha 2\rho\M\T\Psit PA\cH \vv(\xt).
					\label{eq:ralgo_transformed_with_bz_pz}
				\end{align}
			\end{subequations}
			As in Section~\ref{sec:analysis_algo}, we observe that $\bz$ does not affect the other states of system~\eqref{eq:ralgo_transformed_with_bz} and, thus, we will ignore it in our analysis.
			Moreover, by using $\txt = \xt - \oneNn\xstar$ to transform $\xt$, system~\eqref{eq:ralgo_transformed_with_bz} is equivalent to
			\begin{subequations}\label{eq:ralgo_transformed}
				\begin{align}
						\txtp &= \txt + \pr \Lt\left(H\left(\xt + \Ax\T\M\T\pzt\right) -\xt\right)
					\notag\\
					&\hspace{.4cm}
					- \pr\step\Lt H  \left(\G(\xt) + \An\T\M\T\pzt\right)\label{eq:ralgo_transformed_x}
					\\
					\pztp &= \pzt - \alpha\M\T\Psit\left(I + P - 2\rho PA\cH A\T\right)\M\pzt
					\notag
					\\
					&\hspace{.4cm}
					+ \alpha 2\rho\M\T\Psit PA\cH \vv(\txt + \oneNn\xstar),
					\label{eq:ralgo_transformed_pz}
				\end{align}
			\end{subequations}
			where, with a slight abuse of notation, we maintained a hybrid notation with $\xt = \oneNn\mut + \rr\pxt + \oneNn\xstar$.
			System~\eqref{eq:ralgo_transformed} is a singularly perturbed system.
			Specifically,~\eqref{eq:ralgo_transformed_x} plays the role of the slow subsystem, while~\eqref{eq:ralgo_transformed_pz} plays the role of the fast one.
			As in system~\eqref{eq:sp_system}, the subsystem~\eqref{eq:ralgo_transformed_pz} has an equilibrium $\pzeq(\txt)$ parametrized in the slow state $\txt$. 
			Indeed, by using $\x = \tx + \oneNn\xstar$, one may check this claim through the following chain of equations
			\begin{align}
				&(I + P - 2\rho PA\cH A\T)\M\pzeq(\txt) - 2\rho PA\cH \vv(\x)
				\notag\\
				&\stackrel{(a)}{=}
				2\rho\left((I + P - 2\rho PA\cH A\T)(\M\F\inv M\T) - I\right)PA\cH \vv(\x)
				\notag\\
				&\stackrel{(b)}{=}
				2\rho\left(\M\M\T - I\right)PA\cH  \vv(\x)
				\notag\\
				&\stackrel{(c)}{=}
				-2\rho\B\B\T PA\cH  \vv(\x)= 0,\label{eq:equilibrium_ralgo}
			\end{align}
			where in $(a)$ we used the definition of $\pzeq$ (cf.~\eqref{eq:equilibrium}), in $(b)$ we used the result $(I + P - 2\rho PA\cH A\T)\M\F\inv M\T = \M\M\T$, in $(c)$ we used $\M\M\T = I - \B\B\T$, while the last equality follows by~\eqref{eq:B_P_A}.

			\subsection{\GC{Boundary Layer System}}
			\label{sec:bl_ralgo}

			Here, we study the boundary layer system of~\eqref{eq:ralgo_transformed}, i.e., following the same steps of Section~\ref{sec:bl_algo}, we study the subsystem~\eqref{eq:ralgo_transformed_pz} by considering an arbitrarily fixed $\txt = \tx \in \R^{N\n}$ and the error variable $\tpzt = \pzt - \pzeq(\tx)$.
			Therefore, using the result in~\eqref{eq:equilibrium_ralgo}, we get
			\begin{align}\label{eq:bl_ralgo}
				\tpztp = \tpzt - \alpha\hFt\tpzt,
			\end{align}
            \GC{
            where $\hF: \N \to \R^{\m \times \m}$ reads as 
            \begin{align*}
                \hFt := \M\T\Psit\left(I + P - 2\rho PA\cH A\T\right)\M.
            \end{align*}
			The next lemma ensures that the origin is globally exponentially stable for~\eqref{eq:bl_ralgo}.
			\begin{lemma}\label{lemma:bl_ralgo}
				Consider~\eqref{eq:bl_ralgo}. 
				Then, the origin is a globally exponentially stable equilibrium point for~\eqref{eq:bl_ralgo} for all $\alpha \in (0,1)$.
			\end{lemma}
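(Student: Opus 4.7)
The boundary-layer system is a discrete-time linear time-varying system of the form $\tpztp = (I - \alpha\hFt)\tpzt$, so my plan is to show that the state-transition matrix over a window of length $\Tmax$ is a uniform strict contraction in some quadratic norm, and conclude global exponential stability with rate independent of $\iter$.

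First, I would show that each individual factor $I - \alpha\hFt$ is nonexpansive in a suitable norm $\|\cdot\|_S$ induced by some symmetric positive definite $S$. The idea is to reuse the firm-nonexpansiveness structure of the ADMM operator $I + P - 2\rho P A \cH A\T$ that underlies Lemma~\ref{lemma:F} (inherited via \cite[Lemma~2]{bastianello2022admm}): $\alpha$-averaging a firmly nonexpansive operator is nonexpansive for any $\alpha \in (0,1)$. The diagonal mask $\Psit$ is a $0/1$ projection that merely freezes the coordinates of agents/edges that are idle at iteration $\iter$, and such a projection cannot increase a quadratic norm when composed into the averaged iteration, so $\|I - \alpha\hFt\|_S \le 1$ at every $\iter$, restricted to $\text{range}(\M)$ where $\hFt$ is defined.

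Second, I would invoke Assumption~\ref{ass:imperfections} to upgrade the per-step nonexpansiveness to a uniform strict contraction across the window $[\iter+1,\iter+\Tmax]$: since for every edge $(i,j)$ there exists $\tau$ in this window with $\psi_{ij}(\tau) = 1$, every coordinate of $\pz$ undergoes at least one full ADMM-type update in the window. Aggregating the per-step nonexpansiveness with at least one strictly contractive step per direction, I would obtain a constant $\gamma \in [0,1)$, independent of $\iter$, such that $\left\|\prod_{s=\iter}^{\iter+\Tmax-1}(I - \alpha\hF(s))\right\|_S \le \gamma$. Exponential stability with rate at least $-\log(\gamma)/\Tmax$ follows immediately by iterating this contraction on successive windows.

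The main obstacle I anticipate is the second step: the factors $I - \alpha\hF(s)$ do not commute, so a direct Schur-stability argument applied to the product is unwieldy. I expect to circumvent this either by adapting the firm-nonexpansiveness argument of \cite[Lemma~2]{bastianello2022admm} to the masked time-varying setting, or equivalently by exhibiting an $S$ for which the cumulative matrix $\sum_{s=\iter}^{\iter+\Tmax-1}\hF(s)\T S \hF(s)$ is uniformly positive definite on $\text{range}(\M)$ thanks to the essentially cyclic activation pattern. Either route should deliver the window-wise strict contraction needed for the conclusion.
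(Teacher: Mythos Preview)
Your overall strategy---establish a uniform contraction for the state-transition matrix over each window of length $\Tmax$ and iterate---matches the paper's. The divergence, and the gap, lies in your first step.

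The assertion that the $0/1$ mask ``cannot increase a quadratic norm when composed into the averaged iteration'' is where the argument is incomplete. Lifting to the full space, the one-step map is $\hTt = (I-\Psit) + \Psit T$ with $T = (1-\alpha)I + \alpha(-P + 2\rho PA\cH A\T)$. Since $\Psit$ and $I-\Psit$ are complementary orthogonal projections, $\|\hTt z\|^2 = \|(I-\Psit)z\|^2 + \|\Psit Tz\|^2$, so Euclidean nonexpansiveness of $\hTt$ would require $\|\Psit Tz\| \le \|\Psit z\|$. Nonexpansiveness, firm nonexpansiveness, or $\alpha$-averagedness of $T$ only gives $\|Tz\|\le\|z\|$; it does not control the projected component, because $T$ can transfer mass from the frozen coordinates into the active ones through the permutation $P$. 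Closing this step would require exploiting the specific edge-wise block structure of the ADMM reflection relative to $\Psit$, not merely an abstract averagedness property, and you have not supplied that argument. Your fallback route---exhibit $S$ for which $\sum_s \hF(s)\T S \hF(s)$ is uniformly positive definite on $\mathrm{range}(\M)$---is essentially a restatement of the goal and gives no indication of how to construct $S$.

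The paper avoids per-step nonexpansiveness altogether. It works in the ambient space with $\hTt$ rather than with $I-\alpha\hFt$, expands $z$ in the eigenbasis of the \emph{unmasked} operator $T$, and tracks the masked product coordinate by coordinate: the $k$-th coordinate of the contribution of eigenvector $v_i$ picks up a factor $\nu_i$ precisely at those iterations $\tau$ with $[\Psi(\tau)]_{kk}=1$. Assumption~\ref{ass:imperfections} guarantees at least one such factor per window for every $k$, and since $\M$ annihilates the unit eigenspace of $T$, this yields $\big\|\prod_{\tau=t_0}^{t}(I-\alpha\hF(\tau))\big\|\le \kappa\,\nu_{\text{max}}^{\,t-t_0}$, after which \cite[Lemma~1]{zhou2017asymptotic} gives exponential stability. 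The eigenstructure argument is tied to the linear, time-invariant nature of $T$, but it sidesteps the masking-versus-nonexpansiveness obstacle you would otherwise have to resolve.
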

			\begin{proof}
                We will proceed by studying the spectrum of the matrix $\prod_{\initer=\iter+1}^{\iter+\Tmax}\hFta$ for all $\iter \in \N$.
                To this end, let $T \in \R^{2\n\dd \times 2\n\dd}$ and $\hT: \N \to \R^{2\n\dd \times 2\n\dd}$ be defined as 
                \begin{subequations}
                    \begin{align}
                        T &:= (1-\alpha)I + \alpha(-P + 2\rho PA\cH A\T)
                        \\
                        \hTt &:= I + \Psit(T - I) 
                        = I -\Psit + \Psit T.\label{eq:hTt}
    				\end{align}
                \end{subequations}
				To simplify the computations, let us start with the case in which $T$ is diagonalizable.
                Then, we decompose each $\z \in \R^{2\n\dd}$ as a linear combination of the eigenvectors $v_i$ of $T$, namely 
				\begin{align}
					T\z 
                    = T\sum_{i=1}^{2\n\dd}a_i v_i
					&
                    \stackrel{(a)}{=}
					\sum_{i=1}^{2\n\dd}a_i\nu_iv_i
                    \stackrel{(b)}{=} 
					\sum_{i=1}^{\bb}a_i v_i + \sum_{i=\bb+1}^{2\n\dd}a_i\nu_iv_i\label{eq:eigenvectors}
					,
				\end{align}
				where in $(a)$, for all $i \in \until{2\n\dd}$, we use the eigenvalue $\nu_i$ associated to the eigenvector $v_i$, while in $(b)$ we use the fact that $\bb$ eigenvalues are equal to $1$~\cite{bastianello2020asynchronous} and, without loss of generality, we pose them as the first ones.
				Let us decompose $v_i$ as $v_i := \col(v_{1,i},\dots,v_{1,2\n\dd})$ for all $i \in \until{2\n\dd}$ and
				let $n_{k}(\iter) \in \N$ be the number of iterations in which the $k$-th component of $\Psitau$ is $1$ for $\tau \in \{\iter+1,\iter+\Tmax\}$ and $k \in \until{2\n\dd}$.
                Then, we focus on the time interval $\{\iter+1,\dots,\iter+\Tmax\}$ and, by combining this notation with~\eqref{eq:hTt} and~\eqref{eq:eigenvectors}, we get
				\begin{align}
					\prod_{\initer=\iter+1}^{\iter+\Tmax}\hTta\z 
					= \sum_{i=1}^{\bb}
					\begin{bmatrix}
						a_i v_{1,i}
						\\
						\vdots 
						\\
						a_i v_{2\n\dd,i}
					\end{bmatrix} 
					+ \sum_{i=\bb+1}^{2\n\dd}
					\begin{bmatrix}
						\nu^{n_1(\iter)}_i  a_i v_{1,i}
						\\
						\vdots
						\\
						\nu^{n_{2\n\dd}(\iter)}_i a_i v_{2\n\dd,i}
					\end{bmatrix}.\label{eq:prod_htT}
				\end{align}
				In light of Assumption~\ref{ass:imperfections}, we note that $n_{k}(\iter) > 1$ for all $k \in \until{2\n\dd}$ and $\iter\in\N$.
				Moreover, we recall that the eigenvalues $\nu_i$ with $i \in \{\bb+1,\dots,2\n\dd\}$ lie in the open unit disc~\cite{bastianello2020asynchronous}, i.e., $|\nu_i| < 1$ for all $i \in \{\bb+1,\dots,2\n\dd\}$.
				Moreover, since the unitary eigenvalues of $T$ are semi-simple~\cite{bastianello2020asynchronous}, by leveraging the concept of generalized eigenvectors and the above arguments, we can easily recover a result analogous to~\eqref{eq:prod_htT} (with a finite correcting factor $\kappa$) also in the case in which $T$ is not diagonalizable.
				In both cases, by construction, the matrix $\M$ is orthogonal to the unitary eigenspace of $T$ and, thus, for all $\iter_0, \iter \in \N$ with $\iter \ge \iter_0$, the result~\eqref{eq:prod_htT} leads to 
				\begin{align}
					\norm{\prod_{\tau=\iter_0}^{\iter}\left(I - \alpha\hFta\right)} &= \norm{\prod_{\tau=\iter_0}^{\iter}\M\T\hTta\M} 
					\notag\\
					&\leq \frac{\kappa}{|\nu_i|^{\Tmax}}\nu_{\text{max}}^{\iter - \iter_0},\label{eq:norm_hTt}
				\end{align}
				where $\nu_{\text{max}} := \left(\max_{i \in \{\bb+1,\dots,2\n\dd\}}|\nu_i|\right)^\frac{\Tmax - 1}{\Tmax}$ and
				\begin{align*}
					\begin{cases}
						\kappa  = 1 \text{ if $T$ is diagonalizable} 
						 \\
						 \kappa \ge 1 \text{ if $T$ is not diagonalizable}.
					\end{cases}
				\end{align*}
                Since $\nu_{\text{max}} < 1$, the property~\eqref{eq:norm_hTt} allows us to invoke the result~\cite[Lemma~1]{zhou2017asymptotic} about linear time-varying discrete-time systems and conclude the proof.
 			\end{proof}
            }

			\subsection{Reduced System: Averaging Analysis}
			\label{sec:rs_ralgo}

			In this section, we will study the reduced system associated to~\eqref{eq:ralgo_transformed}, i.e., the system obtained by plugging $\pzt = \pzeq(\txt)$  into~\eqref{eq:ralgo_transformed_x} for all $\iter\in\N$.
			Then, by enforcing the definition of $\pzeq$ (cf.~\eqref{eq:equilibrium}), the results in~\eqref{eq:results_equilibrium}, $\oneNn \in \ker\left(\frac{\oneNn\oneNn\T}{N} - I\right)$, and $\oneNn\T\rr = 0$, such a reduced system reads as
			\begin{align}\label{eq:rs_ralgo}
				\txtp &= \txt + \pr \Lt\left(I - \frac{\oneNn\oneNn\T}{N}\right)\txt 
				\notag\\
				&\hspace{.4cm}
				- \pr \step\Lt\frac{\oneNn\oneNn\T}{N}\G(\txt + \oneNn\xstar).
			\end{align}
			To simplify the notation, we equivalently write system~\eqref{eq:rs_ralgo} as
			\begin{align}\label{eq:rs_ralgo_r}
				\txtp &= \txt + \pr r(\txt,\iter),
			\end{align}
			where we introduced $r: \R^{N\n} \times N \to \R^{N\n}$ defined as 
			\begin{align}
				r(\tx,\iter) &= \Lt\left(\frac{\oneNn\oneNn\T}{N} - I\right)\tx
				\notag\\
				&\hspace{.4cm}
				- \step \Lt\frac{\oneNn\oneNn\T}{N}\G(\tx + \oneNn\xstar).
			\end{align}
			\GC{We note that $r(0,\iter) = 0$ for all $\iter \in \N$.}
			Since system~\eqref{eq:rs_ralgo} is time-varying, we study it by resorting to averaging theory.
			To this end, let the function $\ra: \R^{N\n} \to \R^{N\n}$ be defined as 
			\begin{align*}
				&\ra(\tx) 
				:= \lim_{T \to \infty}\frac{1}{T}\sum_{\initer = \bar{\iter} + 1}^{\bar{\iter}+ T} r(\tx,\initer).
			\end{align*}
			\GC{In light of Assumption~\ref{ass:imperfections},}
   such a limit exists uniformly in $\bar{\iter}$ and for all $\tx \in \R^{N\n}$.
			Specifically, it reads as
			\begin{align}
				\ra(\tx) &= \EL\left(\frac{\oneNn\oneNn\T}{N} - I\right)\tx
				\notag\\
				&\hspace{.4cm}
				-\step\EL\frac{\oneNn\oneNn\T}{N}\G(\tx + \oneNn\xstar),\label{eq:ra}
			\end{align}
			\GC{where we introduced $\EL := \blkdiag(\lambda_{1,\text{av}}I,\dots,\lambda_{N,\text{av}}I)$, with $\Eli :=\lim_{T\to\infty}\frac{1}{T}\sum_{\initer=\iter+1}^T\lita$ for all $i \in \until{N}$.}
			The next lemma provides a suitable function which turns out to be crucial in concluding the proof of Theorem~\ref{th:convergence_ralgo}. %
			\begin{lemma}\label{lemma:rs_ralgo}
				Consider $\ra(\cdot)$ as defined in~\eqref{eq:ra}.
				Then, there exist $ \beta_1, \beta_2, \beta_3, \beta_4 > 0$, and $\Wa: \R^{N\n} \to \R$ such that, for all $\GC{\step \in (0,4\str/( NL^2))}$ and $\txa \in \R^{N\n}$, it holds 
				\begin{subequations}\label{eq:r_conditions}
					\begin{align}
						\beta_1 \norm{\txa}^2 \leq \Wa(\txa) &\leq \beta_2\norm{\txa}^2\label{eq:r_conditions_first_bound_generic}
						\\
						\nabla\Wa(\txa)\T \ra(\txa) &\leq - \beta_3\norm{\txa}^2\label{eq:r_conditions_minus_generic}
						\\
						\norm{\nabla\Wa(\txa)}&\leq \beta_4\norm{\txa}.\label{eq:r_conditions_bound_generic}
					\end{align}
				\end{subequations}
			\end{lemma}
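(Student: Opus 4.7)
The plan is to pick the weighted quadratic Lyapunov function
\begin{align*}
\Wa(\txa) := \tfrac{1}{2}\txa\T \EL^{-1}\txa,
\end{align*}
which is well-defined because Assumption~\ref{ass:imperfections} forces $\lambda_{i,\text{av}} \in [1/\Tmax, 1]$, so that $\EL^{-1}$ is symmetric and positive definite. The key motivation is that this weighting cancels the $\EL$ factor appearing in $\ra$, yielding
\begin{align*}
\nabla\Wa(\txa)\T\ra(\txa) = \txa\T\!\left[\left(\tfrac{\oneNn\oneNn\T}{N}-I\right)\txa - \step\tfrac{\oneNn\oneNn\T}{N}\G(\txa+\oneNn\xstar)\right]\!.
\end{align*}
With this choice, conditions~\eqref{eq:r_conditions_first_bound_generic} and~\eqref{eq:r_conditions_bound_generic} are immediate from the eigenvalue bounds of $\EL^{-1}$, with $\beta_1 = (2\max_i \lambda_{i,\text{av}})^{-1}$, $\beta_2 = (2\min_i \lambda_{i,\text{av}})^{-1}$, and $\beta_4 = 2\beta_2$.

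For the decrease condition~\eqref{eq:r_conditions_minus_generic}, I would decompose $\txa = \oneNn\bx + \rr\px$ as in Lemma~\ref{lemma:rs_algo}, so that $\|\txa\|^2 = N\|\bx\|^2 + \|\px\|^2$ and $\txa\T(\tfrac{\oneNn\oneNn\T}{N}-I)\txa = -\|\px\|^2$. The remaining cross term reduces to $-\step\bx\T\sum_{i=1}^N \nabla f_i(\txai+\xstar)$; adding and subtracting $\sum_i \nabla f_i(\bx+\xstar) = \nabla f(\bx+\xstar)$, I would then exploit (i) $\str$-strong convexity of $f$ together with $\nabla f(\xstar)=0$ to bound the ``consensus part'' by $-\step\str\|\bx\|^2$, and (ii) Lipschitz continuity of each $\nabla f_i$ together with the Cauchy--Schwarz estimate $\sum_i \|\txai-\bx\|\leq\sqrt{N}\|\px\|$ to bound the ``dispersion part'' by $\step L\sqrt{N}\|\bx\|\|\px\|$. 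Young's inequality with a free parameter $a>0$ on the cross term then yields
\begin{align*}
\nabla\Wa(\txa)\T\ra(\txa) \leq -\left(1-\tfrac{1}{2a}\right)\|\px\|^2 - \left(\step\str - \tfrac{a\step^2 L^2 N}{2}\right)\|\bx\|^2,
\end{align*}
whose two coefficients are strictly positive whenever $a>1/2$ and $\step < 2\str/(aL^2 N)$. Since $a$ can be taken arbitrarily close to $1/2$ from above, this exactly covers the assumed range $\step\in(0,4\str/(NL^2))$, and the estimate $\|\bx\|^2+\|\px\|^2\geq \tfrac{1}{N}\|\txa\|^2$ recombines the two terms into $-\beta_3\|\txa\|^2$, delivering~\eqref{eq:r_conditions_minus_generic}.

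The main obstacle is really just identifying the correct Lyapunov function. The diagonal template $\tfrac{1}{2}\|\bx\|^2+(m/2)\|\px\|^2$ used in Lemma~\ref{lemma:rs_algo} breaks down here, because the time-averaged activation matrix $\EL$ does not commute with the projection onto the consensus subspace and thus spoils the clean decoupling between $\bx$ and $\px$ during the descent computation. Weighting the quadratic form by $\EL^{-1}$ absorbs $\EL$ \emph{before} the change of coordinates is carried out, so that the remainder of the argument reduces to a textbook one-step Lyapunov descent calculation mirroring the centralized gradient method, with $\step$ as the only effective tuning parameter.
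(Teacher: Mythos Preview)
Your proposal is correct and follows essentially the same approach as the paper: the paper also chooses $\Wa(\txa)=\sum_i \tfrac{1}{2\Eli}\|\txai\|^2=\tfrac{1}{2}\txa\T\EL^{-1}\txa$, performs the same $(\bx,\px)$ decomposition after the $\EL$ cancellation, and obtains the identical cross-term bound $\step L\sqrt{N}\|\bx\|\|\px\|$. The only cosmetic difference is that the paper packages the final step as a $2\times 2$ Sylvester-criterion check on the matrix $Q_{\text{av}}(\step)$ rather than your Young-inequality argument with free parameter $a$, but both yield the same admissible range $\step\in(0,4\str/(NL^2))$.
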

			\begin{proof}
				Let us rewrite~\eqref{eq:ra} in a block-wise sense.
				To this end, for all $i \in \until{N}$, let $\rai: \R^{N\n} \to \R^{\n}$ be defined as
				\begin{align}
					\rai(\txa) &=  \Eli \left(\frac{1}{N}\sum_{j=1}^N \txaj - \txai\right) 
					\notag\\
					&\hspace{.4cm}
					-  \step\Eli\oneNn\T\G(\txt+\oneNn\xstar)/N,\label{eq:r_i_av}
				\end{align}
				where we decomposed $\txa$ according to $\txa := \col(\tx_{1,\text{av}}, \dots, \tx_{N,\text{av}})$ with $\txai \in \R^\n$ for all $i \in \until{N}$.
				Then, it holds $\ra(\tx) := \col(r_{1,\text{av}}(\txa),\dots,r_{N,\text{av}}(\txa))$.
				Now, let us consider the function $\Wai: \R^{\n} \to \R$ defined as 
				\begin{align*}
					\Wai(\txai) = \frac{1}{2\Eli}\norm{\txai}^2.
				\end{align*}
				Then, \GC{it holds $\nabla\Wai(\txai) = \frac{\txai}{\Eli}$ which, combined with~\eqref{eq:r_i_av}, leads to}
				\begin{align}
					&
                    \nabla \Wai(\txai)\T \rai(\txa) 
                    \notag\\
                    &= \! - \! \norm{\txai}^2 \! + \! \txai\T \frac{1}{N}\sum_{j=1}^N \txaj 
					\! - \! \step\txai\T \frac{\oneNn\T}{N}\G(\txa+\oneNn\xstar)
					\notag\\
					&\stackrel{(a)}{=}
					\! - \! \norm{\txai}^2 \! + \! \txai\T \mua
					\! - \! \step\txai\T \frac{\oneNn\T}{N}\G(\txa\GC{+\oneNn\xstar}),\label{eq:nabla_Wai}
				\end{align}
				\GC{where in $(a)$ we used $\mua := \sum_{j=1}^N \txaj/N$.}
				Let $\Wa(\txa) := \sum_{i=1}^N\Wai(\txai)$.
				\GC{In light of the essentially cyclic property enforced in Assumption~\ref{ass:imperfections}, there exists $\el \in (0,1)$ such that, for all $i \in \until{N}$, $\Wa$ is positive definite and, thus, the conditions~\eqref{eq:r_conditions_first_bound_generic} and~\eqref{eq:r_conditions_bound_generic} are satisfied.}
				In order to check~\eqref{eq:r_conditions_minus_generic}, we use~\eqref{eq:nabla_Wai} obtaining
				\begin{align}
					&
                    \nabla \Wa(\txa)\T \GC{\ra(\txa)}
                    \notag\\
                    &= - \norm{\txa}^2 + N \mua\T \mua - \step\mua\T\oneNn\T\G(\txa\GC{+\oneNn\xstar})
					\notag\\
					&\stackrel{(a)}{=} - \norm{\oneNn\mua + \rr\pxa}^2 + N \mua\T \mua 
					\notag\\
					&\hspace{.4cm}
					- \step\mua\T\oneNn\T\G(\txa\GC{+\oneNn\xstar})
					\notag\\
					&\stackrel{(b)}{=} - \norm{\pxa}^2 - \step\mua\T\oneNn\T\G(\txa\GC{+\oneNn\xstar})
					\notag\\
					&\stackrel{(c)}{=} - \norm{\pxa}^2 - \step\mua\T\nabla f(\mua + \xstar) 
					\notag\\
					&\hspace{.4cm}
					+ \step\mua\T N\tilde{\G}(\muat,\pxa),\label{eq:nablaW} 
				\end{align}
				where in $(a)$ we used $\pxa := \rr\T\txa$ and decomposed $\txa$ according to $\txa = \oneNn\mua + \rr\pxa$, in $(b)$ we expanded the square norm and used $\rr\T\rr = I$, $\oneNn\T \oneNn = N I$, and $\rr\T \oneNn = 0$, while in $(c)$ we added and subtracted the term $\step\mua\T\nabla f(\mua + \xstar)$ and used the operator $\tilde{\G}(\cdot,\cdot)$ with the same meaning as in~\eqref{eq:deltaG}.
				By using the strong convexity of $f$ (cf. Assumption~\ref{ass:strong_and_lipschitz}) and $\nabla f(\xstar) = 0$, we bound~\eqref{eq:nablaW} as 
				\begin{align}
					\Wa(\txa)\T \txa &\leq \! - \! \norm{\pxa}^2 \! - \! \step\str\norm{\mua}^2
					\! + \! \step\mua\T N\tilde{\G}(\muat,\pxa)
					\notag\\
					&\stackrel{(a)}{\leq}
					\! - \! \norm{\pxa}^2 
					\! - \! \step\str\norm{\mua}^2
					\! + \! \step\lip\sqrt{N}\norm{\mua}\!\norm{\pxa}
					\notag\\
					&\stackrel{(b)}{\leq}
					-\begin{bmatrix}
						\norm{\mua}
						\\
						\norm{\pxa} 
					\end{bmatrix}\T Q\av(\step)\begin{bmatrix}
						\norm{\mua}
						\\
						\norm{\pxa}
					\end{bmatrix}
					,\label{eq:nablaW_2} 
				\end{align}
				where in $(a)$ we used the Cauchy-Schwarz inequality and the result~\eqref{eq:bound_delta_G} to bound the term $\mua\T N\tilde{\G}(\muat,\pxa)$, while in $(b)$ we rearranged the inequality in a matrix form by introducing $Q\av(\step) = Q\av(\step)\T \in \R^{2 \times 2}$ defined as 
				\begin{align*}
					Q\av(\step) = \begin{bmatrix}
						\step\str& -\step\lip\sqrt{N}/2
						\\
						-\step\lip\sqrt{N}/2& 1
					\end{bmatrix}.
				\end{align*}
				By Sylvester Criterion, it holds $Q\av(\step) > 0$ if and only if $\step < \frac{4\str}{\lip^2 N}$.
				Therefore, by choosing any $\step \in (0,\frac{4\str}{\lip^2 N})$, the inequality~\eqref{eq:nablaW_2} leads to 
				\begin{align*}
					\nabla\Wa(\txa)\T \ra(\txa) &\leq - q\av\left(\norm{\mua}^2 + \norm{\pxa}^2\right)
					\notag\\
					&= -q\av \txa\T \left(\frac{\oneNn\oneNn\T}{N^2} + \rr\rr\T\right)\txa,
				\end{align*}
				where $q\av > 0$ is the smallest eigenvalue of the positive definite matrix $Q\av(\step)$.
				Thus, since $\frac{\oneNn\oneNn\T}{N^2} + \rr\rr\T > 0$, also~\eqref{eq:r_conditions_minus_generic} is ensured and the proof is concluded.
			\end{proof}

			\subsection{Proof of Theorem~\ref{th:convergence_ralgo}}
			\label{sec:proof_ralgo}

			The proof of Theorem~\ref{th:convergence_ralgo} is based on a proper exploitation of Theorem~\ref{th:theorem_generic}.
			\GC{More in details, to apply Theorem~\ref{th:convergence_ralgo}, we first need to use a result arising in averaging theory, i.e.,~\cite[Prop.~7.3]{bof2018lyapunov}.}
			Indeed, in order to apply Theorem~\ref{th:theorem_generic} we need to (i) provide a function $U$ satisfying the conditions~\eqref{eq:U_generic} when applied to system~\eqref{eq:bl_ralgo}, (ii) provide a function $W$ satisfying the conditions~\eqref{eq:W_generic} when system~\eqref{eq:rs_ralgo} is considered, and (iii) guarantee the Lipschitz continuity of the dynamics of~\eqref{eq:ralgo_transformed} and $\pzeq(\cdot)$ (cf.~\eqref{eq:equilibrium}).
			\paragraph{Point (i)} we recall that Lemma~\ref{lemma:bl_ralgo} guarantees that the origin is a globally exponentially stable equilibrium point of system~\eqref{eq:bl_ralgo}.
			Thus, the function $U$ satisfying~\eqref{eq:U_generic} can be obtained by applying the Converse Lyapunov Theorem~\cite[Th.~5.8]{bof2018lyapunov}.
			Indeed, we note that system~\eqref{eq:bl_ralgo} is independent with respect to the slow state $\txt$.
			Thus, the uniformity with respect to $\txt$ of the conditions~\eqref{eq:U_generic} is guaranteed.
			\paragraph{Point (ii)} by invoking Lemma~\ref{lemma:rs_ralgo} and the Lipschitz continuity of the gradients of the objective functions (cf. Assumption~\ref{ass:strong_and_lipschitz}), we can apply~\cite[Prop.~7.3]{bof2018lyapunov}
            which guarantees the existence of a function $W$ and $\bstepa > 0$ such that, for all $\pr \in (0,\bstepa)$, $W$ satisfies the conditions~\eqref{eq:W_generic} along the trajectories of the reduced system~\eqref{eq:rs_ralgo}.
			\paragraph{Point (iii)} Finally, the Lipschitz continuity of the gradients of the objective functions (cf. Assumption~\ref{ass:strong_and_lipschitz}) allows us to claim that point (iii) is satisfied too.
			
			Once these three points have been checked, we apply Theorem~\ref{th:theorem_generic} and, thus, guarantee that there exists $\bar{\pr}_{\text{R}} \in (0,\bstepa)$ such that, for all $\pr \in (0,\bar{\pr}_{\text{R}})$, the point $(0,\pzeq(0))$ is globally exponentially stable for system~\eqref{eq:ralgo_transformed}.
			The proof concludes by turning out to the original coordinates $(\x,\z)$. 

			\section{Robustness against inexact computations and communications}
			\label{sec:robustness}

			In this section, we deal with possible errors in the algorithm updates (due to, e.g., quantization effects, disturbances affecting the inter-agent communication, or stochastic gradients).
			In this framework, for all $i \in \until{N}$, we introduce $\errxit \in \R^\n$ and $\errzit \in \R^{2\n\dd_i}$ to model the errors affecting the update of $\xit$ and $\zit$, respectively, at time $\iter$.
			Then, by considering also asynchronous updates and packet losses in the communication (cf. Section~\ref{sec:asynchronous_and_lossy}), we study the disturbed version of~\eqref{eq:ralgo_aggregate}, namely%
				\begin{subequations}\label{eq:algo_with_errrors_aggregate}
					\begin{align}
						\xtp &= \xt + \pr \Lt\left(H\left(\xt + \Ax\T\zt\right) - \xt\right)
						\notag\\
						&\hspace{.4cm}
						- \pr \step\Lt H  \left(\G(\xt) + \An\T\zt\right) + \errxt\label{eq:algo_with_errrors_x_aggregate}
						\\
						\ztp &= \zt \! - \! \alpha\Psit\left((I \! + \! P)\zt \! - \! 2\rho PA\cH (\zt \! + \! \vv(\xt))\right)\! + \errzt,\label{eq:algo_with_errrors_z_aggregate}
					\end{align}
				\end{subequations}
                where $\xt$, $\zt$, $\G(\xt)$, and all the matrices have the same meaning as in~\eqref{eq:algo_aggregate_form} and~\eqref{eq:ralgo_aggregate}, \GC{while $\errxt$ and $\errzt$ read as}
                \begin{align}
				\errxt := \begin{bmatrix}\err_{1,\x}^\iter\\ 
					\vdots
					\\
					\err_{N,\x}^\iter\end{bmatrix}, \quad \errz := \begin{bmatrix}\err_{1,\z}^\iter
						\\
						\vdots
						\\
						\err_{N,\z}^\iter\end{bmatrix}.\label{eq:rx_rz}
			\end{align}
			The next result characterizes the robustness of~\eqref{eq:algo_with_errrors_aggregate} with respect to $\err := \col(\errx,\errz) \in \R^{(N + 2\dd)\n}$
   in terms of ISS property.
			\begin{theorem}\label{th:error}
				Consider~\eqref{eq:algo_with_errrors_aggregate} and let Assumptions~\ref{ass:network},~\ref{ass:strong_and_lipschitz}, and~\ref{ass:imperfections} hold.
				Then, there exist a $\mathcal{KL}$ function $\beta(\cdot)$, a $\mathcal{K}_\infty$ function $\nu(\cdot)$, and some constants $\bar{\pr}_{\text{R}}, \bar{\alpha}_{\text{R}}, \bpr, c^0 > 0$ such that, for all $\rho >0$, $\alpha \in (0,\bar{\alpha}_{R})$, $\pr \in (0,\bar{\pr}_{\text{R}})$, $\step \in (0,\bpr)$, $(\x^0,\z^0) \in \R^{N\n} \times \R^{2\n\dd}$, it holds
				\begin{align*}
					\norm{\xit- \xstar}\leq \beta(c^0,\iter) + \nu(\norm{\err}_\infty),
				\end{align*}
				for all $i \in \until{N}$, $\err \in \mathcal{L}_\infty^{(N + \dd)\n}$, and $\iter \in \N$.\footnote{%
				See~\cite[Chapter 4]{khalil2002nonlinear} for the function classes' definitions.}
		\end{theorem}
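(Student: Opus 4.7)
The plan is to extend the singular perturbation analysis used in the proof of Theorem~\ref{th:convergence_ralgo} by treating the error vector $\err^\iter$ as an exogenous input and establishing a composite Lyapunov argument yielding ISS. First, I apply the same change of coordinates as in Section~\ref{sec:ralgo_reformulation} (namely, $\bz = \B\T \z$, $\pz = \M\T \z$, $\txt = \xt - \oneNn\xstar$) to~\eqref{eq:algo_with_errrors_aggregate}. Because the transformations are linear with bounded matrices, the resulting system is a perturbed version of~\eqref{eq:ralgo_transformed} in which the slow and fast dynamics each receive an additive disturbance that is linear in the components of $\err^\iter$; as before, the $\bz$-component remains decoupled from the rest and can be discarded from the stability analysis.

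Next, I reuse the Lyapunov functions $U$ and $W$ constructed in the proof of Theorem~\ref{th:convergence_ralgo}. Specifically, the Converse Lyapunov Theorem applied to Lemma~\ref{lemma:bl_ralgo} provides $U$ satisfying~\eqref{eq:U_generic}, while~\cite[Prop.~7.3]{bof2018lyapunov} combined with Lemma~\ref{lemma:rs_ralgo} provides $W$ satisfying~\eqref{eq:W_generic} along the averaged reduced dynamics. Using the Lipschitz-type bounds~\eqref{eq:U_bound_generic} and~\eqref{eq:W_bound_generic}, I estimate the increment of $U(\tpztp,\iterp) - U(\tpzt,\iter)$ along the perturbed fast dynamics and of $W(\txtp,\iterp) - W(\txt,\iter)$ along the perturbed (and averaged) slow dynamics; each exhibits the nominal negative-definite term plus a cross term of the form $\text{(constant)} \cdot \norm{\cdot} \cdot \norm{\err^\iter}$ coming from the Lipschitz property of the perturbation.

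Then, I form the composite Lyapunov function $V(\txt,\tpzt,\iter) = W(\txt,\iter) + \eta\, U(\tpzt,\iter)$ for a suitable $\eta > 0$, exactly as done implicitly inside the proof of Theorem~\ref{th:theorem_generic}. Using Young's inequality on the cross terms, I can dominate the disturbance-induced contributions by a fraction of the nominal decrease, plus a term proportional to $\norm{\err^\iter}^2$. Choosing $\pr$, $\alpha$, and the weight $\eta$ sufficiently small, this yields
\begin{align*}
V(\txtp,\tpztp,\iterp) - V(\txt,\tpzt,\iter) \leq -\sigma\, V(\txt,\tpzt,\iter) + \chi\,\norm{\err^\iter}^2,
\end{align*}
for some $\sigma, \chi > 0$. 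This is exactly a discrete-time ISS-Lyapunov inequality and yields the $\mathcal{KL}+\mathcal{K}_\infty$ bound on $\norm{(\txt,\tpzt)}$ by standard comparison arguments.

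Finally, I transform back to the original coordinates: since $\xit - \xstar$ is the $i$-th block of $\txt$ and the map $\z \mapsto (\bz,\pz)$ is an orthogonal change of basis, the ISS bound on $(\txt,\tpzt)$ transfers into the claimed bound on $\norm{\xit - \xstar}$, with $c^0$ depending on the initial condition $(\x^0,\z^0)$ through $\norm{\z^0 - \pzeq(\x^0)}$ and $\norm{\x^0 - \oneNn\xstar}$. The main obstacle I expect is the time-varying nature of the perturbed dynamics (due to the sequences $\lit$ and $\psiijt$): the Lyapunov-based ISS estimate must hold \emph{uniformly} in $\iter$, which in turn requires invoking the uniform averaging result~\cite[Prop.~7.3]{bof2018lyapunov} on the perturbed reduced system rather than on the nominal one and checking that the perturbation-to-disturbance gain introduced by the averaging procedure is finite; this is where the explicit Lipschitz constants from Assumptions~\ref{ass:strong_and_lipschitz} and~\ref{ass:imperfections} play a decisive role.
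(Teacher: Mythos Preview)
Your overall strategy---change coordinates, view~\eqref{eq:algo_with_errrors_aggregate} as a perturbation of the nominal system~\eqref{eq:ralgo_transformed}, and establish an ISS-Lyapunov inequality---is correct and matches the paper's route. The difference is in how the Lyapunov function for the nominal system is obtained. You propose to rebuild the composite $V = W + \eta\,U$ from the separate pieces of Section~\ref{sec:analysis_ralgo} and then bound the increments of $W$ and $U$ individually along the perturbed slow and fast dynamics. The paper instead shortcuts this: since Theorem~\ref{th:convergence_ralgo} already established global exponential stability of the nominal system in the stacked variable $\xi = (\tx, \pz - \pzeq(\tx))$, it applies the Converse Lyapunov Theorem~\cite[Th.~5.8]{bof2018lyapunov} directly to the \emph{combined} nominal dynamics $\xi^{\iterp} = \fxi(\xi^\iter,\iter)$ to obtain a single function $\Vxi$ satisfying quadratic bounds and a Lipschitz-type estimate. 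The ISS inequality then follows from a single application of~\eqref{eq:V_proof_errors_3} and Young's inequality, without having to re-do the singular-perturbation bookkeeping. Your approach would work but is more laborious; the paper's buys economy by leveraging the already-proved stability result as a black box.

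One point in your last paragraph deserves correction: the ``obstacle'' you anticipate---having to apply the averaging result~\cite[Prop.~7.3]{bof2018lyapunov} to the \emph{perturbed} reduced system---is not actually needed. Once a Lyapunov function for the nominal time-varying system is in hand (whether via your composite construction or via Converse Lyapunov), the additive disturbance $\err^\iter$ is handled directly through the Lipschitz bound on the Lyapunov function, uniformly in $\iter$; averaging plays no further role at the perturbation stage. So you can drop that concern.
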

		\begin{proof}
				As in Section~\ref{sec:analysis_algo} and~\ref{sec:analysis_ralgo}, we rewrite system~\eqref{eq:algo_with_errrors_aggregate} by (i) using the changes of coordinates~\eqref{eq:z_perp_z_bar} and $\txt := \xt - \oneNn\xstar$, and (ii) ignoring the evolution of $\bzt$.
				Thus, we get 
				\begin{subequations}
    \label{eq:algo_with_errors_transformed}
					\begin{align}
						\txtp &= \txt + \pr \Lt\left(H\left(\xt + \Ax\T\M\T\pzt\right) -\xt\right)
					\notag\\
					&\hspace{.4cm}
					- \pr\step\Lt H  \left(\G(\xt) + \An\T\M\T\pzt\right) + \errxt %
						\\
						\pztp &= \pzt - \alpha\M\T\Psit\left(I + P - 2\rho PA\cH A\T\right)\M\pzt
						\notag
						\\
						&\hspace{.4cm}
						+ \alpha 2\rho\M\T\Psit PA\cH \vv(\xt) + \M\T\errzt,
					\end{align}
				\end{subequations}
				where, with a slight abuse of notation, we maintained a hybrid notation with $\xt = \txt + \oneNn\xstar$.
				We note that system~\eqref{eq:algo_with_errors_transformed} can be viewed as a perturbed version of~\eqref{eq:ralgo_transformed}.
			To take advantage of this interpretation, let $\xi \in \R^{(N + 2\dd)\n}$ be defined as 
			\begin{align}\label{eq:xi}
				\xi := \begin{bmatrix}
					\tx
					\\
					\pz - \pzeq(\tx)
				\end{bmatrix},
			\end{align} 
			which allows us to compactly rewrite system~\eqref{eq:algo_with_errors_transformed} as 
			\begin{align}\label{eq:xi_system}
				\xi^{\iterp} = \fxi(\xi^\iter,\iter) + \trt,
			\end{align}
			where $\fxi := \R^{(N + 2\dd)\n} \to \R^{(N + 2\dd)\n}$ is suitably defined to contain all the terms arising in~\eqref{eq:algo_with_errors_transformed}, while $\trt \in \R^{(N + 2\dd)\n}$ is defined as $\trt : =\col(\errxt,\M\T \errzt)$.
			In the proof of Theorem~\ref{th:convergence_ralgo} (cf. Section~\ref{sec:proof_ralgo}), by using Theorem~\ref{th:theorem_generic}, we have guaranteed the existence of $\bar{\alpha}_{\text{R}}, \bar{\step_{\text{R}}} > 0$ such that, for all $\alpha \in (0,\bar{\alpha}_{\text{R}})$ and $\pr \in (0,\bar{\pr}_{\text{R}})$, the origin is a globally exponentially stable equilibrium for the nominal system described by 
			\begin{align}\label{eq:xi_system_nominal}
				\xi^{\iterp} = \fxi(\xi^\iter,\iter).
			\end{align}
			Therefore, by using the Converse Lyapunov Theorem~\cite[Th.~5.8]{bof2018lyapunov}, there exist some constants $a_{5}, a_6, a_7, a_8 > 0$ and a Lyapunov function $\Vxi : \R^{(N + 2\dd)\n} \times \N \to \R$ such that 
			\begin{subequations}\label{eq:V_proof_errors}
				\begin{align}
					a_5\norm{\xi}^2 \leq \Vxi(\xi,\iter) &\leq a_6\norm{\xi}^2\label{eq:V_proof_errors_1}
					\\
					\Vxi(\fxi(\xi,\iter),\iterp) - \Vxi(\xi,\iter) &\leq - a_7\norm{\xi}^2\label{eq:V_proof_errors_2}
					\\
					|\Vxi(\xi_1,\iter)  - \Vxi(\xi_2,\iter) &\leq a_8\norm{\xi_1 - \xi_2}(\norm{\xi_1} + \norm{\xi_2}),\label{eq:V_proof_errors_3}
				\end{align}
			\end{subequations}
			for all $\xi, \xi_1, \xi_2 \in \R^{(N + 2\dd)\n}$ and $\iter \in \N$.
			Therefore, by evaluating $\Delta \Vxi(\xi^\iter,\iter) := \Vxi(\xi^\iterp,\iterp) - \Vxi(\xi^\iter,\iter)$ along the trajectories of system~\eqref{eq:xi_system}, we get
			\begin{align}
				\Delta \Vxi(\xi^\iter,\iter)  &= \Vxi(\fxi(\xi^\iter,\iter) + \trt,\iterp) - \Vxi(\xi^\iter,\iter)
				\notag\\
				&\stackrel{(a)}{=}
				\Vxi(\fxi(\xi^\iter,\iter),\iterp) - \Vxi(\xi^\iter,\iter)
				\notag\\
				&\hspace{.4cm}+V(\fxi(\xi^\iter,\iter) + \trt,\iterp) - \Vxi(\fxi(\xi^\iter,\iter),\iterp)
				\notag
				\\
				&\stackrel{(b)}{\leq}
				- a_7\norm{\xi^\iter}^2
				\notag\\
				&\hspace{.4cm}+V(\fxi(\xi^\iter,\iter) + \trt,\iterp) - \Vxi(\fxi(\xi^\iter,\iter),\iterp)
				\notag
				\\
				&\stackrel{(c)}{\leq}
				- a_7\norm{\xi^\iter}^2
				\notag\\
				&\hspace{.4cm}+ \! a_8\norm{\trt}\!(\norm{\fxi(\xi^\iter,\iter) + \trt} \! + \! \norm{\fxi(\xi^\iter,\iter)}),\label{eq:DeltaV_proof_with_errors}
			\end{align}
			where in $(a)$ we add and subtract the term $\Vxi(\fxi(\xi^\iter,\iter),\iterp)$, in $(b)$ we apply~\eqref{eq:V_proof_errors_2}, in $(c)$ we apply~\eqref{eq:V_proof_errors_3}.
			Being the gradients of $f_i$ Lipschitz continuous (cf. Assumption~\ref{ass:strong_and_lipschitz}), then $\fxi(\cdot,\iter)$ is Lipschitz, i.e., there exists $\Lxi > 0$ such that 
			\begin{align}
				\norm{\fxi(\xi_1,\iter) - \fxi(\xi_2,\iter)} \leq \Lxi\norm{\xi_1 - \xi_2},\label{eq:Lxi}
			\end{align}
			for all $\xi_1, \xi_2 \in \R^{N\n + \m}$ and $\iter \in \N$.
			Thus, since the origin is an equilibrium of the nominal system~\eqref{eq:xi_system_nominal}, we can use (i) $\fxi(\xi^\iter,\iter) = 0$ for all $\iter \in N$, (ii) the result~\eqref{eq:Lxi}, and (iii) the triangle inequality to bound the right-hand side of~\eqref{eq:DeltaV_proof_with_errors} as 
			\begin{align}
				\Delta \Vxi(\xi^\iter,\iter) &\leq - a_7\norm{\xi^\iter}^2 + a_8\norm{\trt}^2 + 2a_8\Lxi\norm{\trt}\norm{\xi^\iter}
				\notag\\
				&\stackrel{(a)}{\leq}- (a_7 + \nu a_8\Lxi)\norm{\xi^\iter}^2 +  \left(a_8 + \tfrac{a_8\Lxi}{\nu}\right)\norm{\trt}^2
				,\label{eq:DeltaV_proof_with_errors_2}
			\end{align} 
			where in $(a)$ we use the Young inequality with parameter $\nu > 0$ to bound the term $2a_8\Lxi\norm{\trt}\norm{\xi^\iter}$.
			%
			Hence, by arbitrarily choosing any $a_7^\prime \in (0,a_7)$, if we set $\nu \in (0,(a_7 - a_7^\prime)/(a_8\Lxi))$, the right-hand side of~\eqref{eq:DeltaV_proof_with_errors_2} can be bounded as 
			\begin{align}
				\Delta \Vxi(\xi^\iter,\iter) &\leq - a_7^\prime\norm{\xi^\iter}^2 +  a_{9}\norm{\trt}^2,\label{eq:DeltaV_proof_with_errors_3}
			\end{align}
			with $a_{9} := a_8 + \frac{a_8\Lxi}{\nu}$.
			The result~\eqref{eq:DeltaV_proof_with_errors_3}, combined with~\eqref{eq:V_proof_errors_1}, allows us to claim that $\Vxi$ is an ISS-Lyapunov function (cf.~\cite[Def.~2]{li2018input}) for system~\eqref{eq:xi_system} and, thus, that system~\eqref{eq:xi_system} is ISS~\cite[Th.~1]{li2018input}.
			The proof follows by (i) using the definition of ISS systems~\cite[Def.~1]{li2018input}, (ii) using the inequalities $\norm{\xit - \xstar} \leq \xi^\iter$ and $\norm{\trt} \leq \norm{\errt}$, and (iii) setting $c^0  = \xi^0$.
			\end{proof}
			\begin{remark}
				The above analysis can be easily adapted to claim the ISS robustness of \algo/ in the case of perfect networks.
				Indeed, the key element in the proof of Theorem~\ref{th:error} is the exploitation of the global exponential stability of $(0,\pzeq(0))$ for~\eqref{eq:ralgo_transformed} (cf. Section~\ref{sec:proof_ralgo}). %
				Then, since in Section~\ref{sec:proof_algo} we prove that $(0,\pzeq(0))$ is globally exponentially stable for~\eqref{eq:sp_system}, i.e., for the system that equivalently describes the evolution of \algo/, the same steps of the proof of Theorem~\ref{th:error} can be used to assess the ISS of \algo/ with respect to the generic errors $\errxit$ and $\errzit$ considered in~\eqref{eq:algo_with_errrors_aggregate}.\oprocend
			\end{remark}

			\section{Numerical Simulations}
			\label{sec:numerical_simulations}

			\GC{
				In this section, we perform some numerical simulations to test the effectiveness of \algo/ and \ralgo/ and to compare them with other distributed algorithms.
				First, in Section~\ref{sec:quadratic}, we consider a quadratic scenario and compare \algo/ with the well-known \GT/ algorithm~\cite{shi2015extra,nedic2017achieving,xi2017add}.
				Indeed, as we better explain in the next, the quadratic scenario allows for choosing the parameters optimizing the algorithms' convergence rates and, thus, allows for a fair speed comparison.

				Then, we numerically confirm the arguments expressed in Section~\ref{sec:from_avg_to_admm} to motivate the design of our ADMM-based schemes.
				Indeed, we compare \ralgo/ with an alternative counterpart using the same optimization-oriented scheme~\eqref{eq:gt-with-consensus-x} but a robust push-sum mechanism (see, e.g.,~\cite{bof_multiagent_2019,tian_achieving_2020}) in place of our ADMM consensus.
				In particular, in Section~\ref{sec:logistic_regression}, we consider a logistic regression scenario with asynchronous agents and packet losses in the communication.
				Finally, in Section~\ref{sec:logistic_regression_errors}, we perform the same comparison considering also generic errors (due to, e.g., inexact communications and computations) affecting the algorithms' updates (see Section~\ref{sec:robustness}).
			}

			The comparisons are done in terms of the convergence of the optimality error norm $\norm{\xt - \oneNn\xstar}$.
			All the simulations are performed by considering networks of $N = 10$ agents and an underlying randomly generated \er/ graph with connectivity parameter $0.1$.
			In all the simulations, we run our schemes by randomly selecting $\x_i^0 \in \R^{\n}$ and $\z_{ij}^0 \in \R^{2\n}$ for all $i \in \until{N}$ and $j \in \cN_i$.
			\GC{As for \GT/ and the push-sum-based scheme, we run it by setting the same $\x_i^0$ set for our schemes, while their consensus mechanisms are initialized as prescribed in~\cite{shi2015extra,nedic2017achieving,xi2017add} and~\cite{bof_multiagent_2019,tian_achieving_2020}, respectively.}

			\subsection{Quadratic Scenario}
			\label{sec:quadratic}
			
			In this section, we consider a quadratic setup described by
			\begin{equation*}%
				\min_{x \in \R^\n}\:\: 
				\sum_{i=1}^{N} \left(\tfrac{1}{2}\x\T Q_i\x  + r_i\T x\right),
			\end{equation*}
			where $Q_i \in \R^{\n \times \n}$ and $r_i \in \R^\n$.
			Moreover, it holds $Q_i = Q_i\T > 0$ for all $i \in \until{N}$ and, thus, the problem is strongly convex.
			We set $\n = 2$ and, for all $i \in \until{N}$, we randomly generate each matrix $Q_i$ so that all its eigenvalues belong to the interval $[1,5]$, while the components of each vector $r_i$ are randomly generated from the interval $[-10,20]$ with a uniform distribution.
			\GC{For both schemes considered, we select the parameters that maximize the rate of convergence. 
			Indeed, since the quadratic scenario results in algorithm updates with a linear form, this can be achieved through a preliminary line search aimed at finding the parameters that minimize the non-unitary largest eigenvalue of the linear autonomous systems describing the algorithms' evolution in error coordinates.}
			Specifically, we choose $\pr = 0.865$, $\step = 0.865$, $\rho =  0.3029$, $\alpha = 0.865$ for \algo/, while we set $\pr =  0.106$ for \GT/.
			Fig.~\ref{fig:error_quadratic} reports the simulation results and shows that \algo/ outperforms \GT/ in terms of convergence rate.
			\begin{figure}[htpb]
				\centering
				\includegraphics[scale=1]{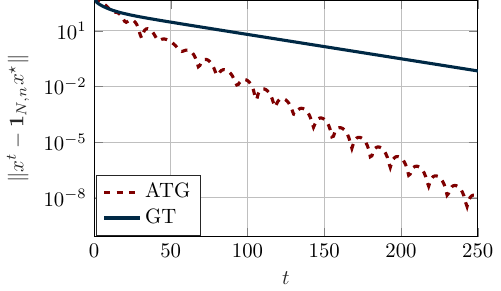}
				\caption{
					Quadratic scenario: comparison between \algo/ (\ATG/) and \GT/ (GT).
				}
				\label{fig:error_quadratic}
			\end{figure}
            
			\subsection{Logistic Regression Scenario with Asynchronous and Lossy Networks}
\label{sec:logistic_regression}

			In this section, we consider a logistic regression scenario in the case of imperfect networks with asynchronous agents and packet losses (see Section~\ref{sec:asynchronous_and_lossy}).
			A network of agents aims to cooperatively train a linear classifier for a set of points in a given feature space. 
			Each agent $i$ is equipped 
			with $m_i \in \N$ points $p_{i,1}, \dots, p_{i,m_i} \in \R^{\n-1}$ with binary 
			labels $l_{i,k} \in \{-1,1\}$ for all $k \in \until{m_i}$. 
			The problem consists of building a linear classification model from these points solving the minimization problem described by
			\begin{equation*}%
				\min_{w \in \R^{\n-1},b \in \R}\:\: 
				\sum_{i=1}^{N}\sum_{k=1}^{m_i}\log\left(1 + e^{-l_{i,k}(w^\top p_{i,k} + b)}\right) 
				+ \frac{C}{2}\norm{\begin{bmatrix}w\\b\end{bmatrix}}^2,
			\end{equation*}
			where $C > 0$ is the so-called regularization parameter. 
			Notice that the presence of the regularization term makes the cost function strongly convex. 
			We set $\n = 2$ and we randomly generate both the points and labels.
			For all $\iter\in\N$, $i \in \until{N}$, and $j \in \cN_i$ we generate each $\lit$ and $\bijt$ by extracting them from the set $\{0,1\}$ according to the binomial distribution with randomly generated probabilities of success which, in turn, are uniformly randomly generated from the interval $[0.1,1)$. 
			\GC{In this setup, we compare \ralgo/ with an alternative distributed algorithm combining the same optimization-oriented scheme~\eqref{eq:gt-with-consensus-x} with the robust push-sum consensus detailed in, e.g.,~\cite{bof_multiagent_2019,tian_achieving_2020}.
			We empirically tune the algorithms' parameters by choosing $\alpha = 0.9$, and $\rho = 0.9$ for \ralgo/, while we set $\pr = 0.1$ and $\step = 1$ for both \ralgo/ and the push-sum-based counterpart.}
			\GC{As predicted by Theorem~\ref{th:convergence_ralgo}, Fig.~\ref{fig:imperfect_netwroks} shows that \ralgo/ exhibits exact and linear convergence in the case of imperfect networks.
			Moreover, it also shows that \ralgo/ is faster than its push-sum-based counterpart.}
			\begin{figure}[H]
			\centering
			\includegraphics[scale=1]{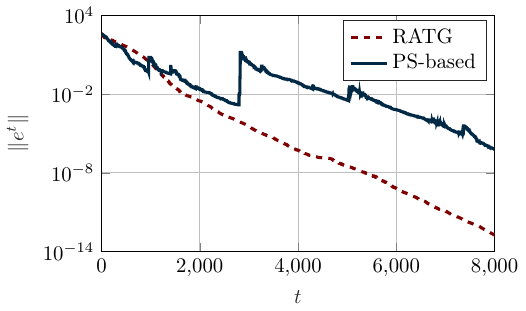}
			\caption{\GC{Logistic regression scenario with imperfect networks: comparison between \ralgo/ (\RATG/) and the push-sum-based counterpart (PS-based).}}
			\label{fig:imperfect_netwroks}
			\end{figure}

			\subsection{Logistic Regression Scenario with Inexactness and Asynchronous and Lossy Networks}
			\label{sec:logistic_regression_errors}

			In this section, we test the robustness of \ralgo/ by considering the same logistic regression problem and the same imperfect network of Section~\ref{sec:logistic_regression} and also the presence of generic disturbances affecting the algorithm updates as generically detailed in Section~\ref{sec:robustness}.
\GC{In detail, for each component $k$ of each algorithmic state of both algorithms, we consider a disturbance $e_k^t$ extracted according to the Gaussian distribution with $0$ mean and $\epsilon$ variance.}
\GC{We empirically tune the algorithms' parameters by choosing $\alpha = 0.9$, and $\rho = 0.9$ for \ralgo/, while we set $\pr = 0.05$ and $\step = 1$ for both \ralgo/ and the push-sum-based counterpart.}
\GC{Fig.~\ref{fig:inexact_1em4} and~\ref{fig:inexact_1em2} compares the algorithms' performance in the case of disturbances characterized by $\epsilon = 0.0001$ and $\epsilon = 0.01$, respectively.
In both cases, as predicted by Theorem~\ref{th:error}, we note that \ralgo/, differently from its push-sum-based counterpart, exhibits the typical behavior of a system with ISS properties.}
\begin{figure}[htpb]
	\centering
	\includegraphics[scale=1]{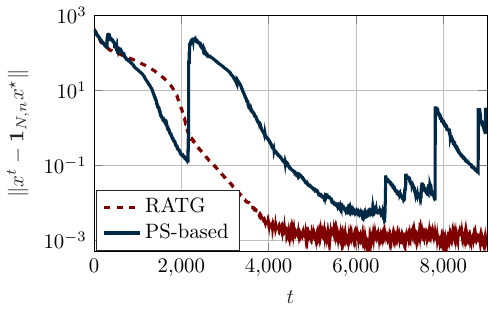}
	\caption{
		\GC{Logistic regression scenario with imperfect networks and inexactnesses: comparison between \ralgo/ (\RATG/) and the push-sum-based counterpart (PS-based) in the case of Gaussian disturbances with variance $\epsilon=0.0001$.}
	}
	\label{fig:inexact_1em4}
\end{figure}
\begin{figure}[htpb]
	\centering
	\includegraphics[scale=1]{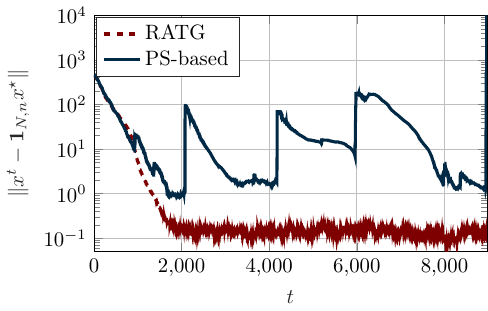}
	\caption{
		\GC{Logistic regression scenario with imperfect networks and inexactnesses: comparison between \ralgo/ (\RATG/) and the push-sum-based counterpart (PS-based) in the case of Gaussian disturbances with variance $\epsilon=0.01$.}
	}
	\label{fig:inexact_1em2}
\end{figure}

			\section{Conclusions}
			
			In this paper, we proposed a novel distributed algorithm for consensus optimization and a robust extension to deal with imperfect networks. 
			In detail, we designed the algorithm by interpreting the dynamic consensus problem as an auxiliary optimization program addressed through a distributed version of the ADMM algorithm.
			We interlaced the obtained scheme with suitable local, proportional actions giving rise to the novel distributed algorithm that we named \algo/.
			In the case of strongly convex problems, we proved the linear convergence of the algorithm through a system theoretical analysis based on timescale separation.
			Then, we considered the case in which the agents' updates are asynchronous and their communication is affected by packet losses.
      		For this scenario, we proposed \ralgo/, namely, a robust version of the first scheme and proved that such a method preserves linear convergence.
			Moreover, by relying on ISS concepts, we demonstrated that our schemes are robust with respect to generic additive errors.
			Finally, numerical tests confirmed our theoretical findings and compared both \algo/ and \ralgo/ with other state-of-the-art distributed algorithms.

			\appendix

			\subsection{Distributed R-ADMM: Algorithm Description}
			\label{sec:admm}

			This section briefly recalls the so-called Distributed R-ADMM method proposed in~\cite{bastianello2020asynchronous}.
			Let us consider a network of $N$ agents communicating according to the graph $\cG := \{\until{N},\cE\}$, with $\cE \subset \until{N} \times \until{N}$, that aim to solve in a distributed fashion the optimization problem
			\begin{align}\label{eq:problem_generic_admm}
				\begin{split}
					\min_{
							(\x_1,\dots,\x_N) \in \R^{N\n}
					}
					&\sum_{i=1}^N \ell_i(\x_i)
					\\
					\text{s.t.:} \: \: &\x_i = \x_j  \: \forall (i,j) \in \cE,
				\end{split}
			\end{align}
			where $\ell_i: \R^{\n} \to \R$ for all $i \in \until{N}$.
			When running Distributed R-ADMM, each agent $i$, at each iteration $\iter \in \N$, maintains a local estimate $\xit \in \R^{\n}$ about a solution to problem~\eqref{eq:problem_generic_admm} and an auxiliary variable $\zijt$ for each $j \in \cN_i$.
			These quantities are updated according to
			\begin{subequations}\label{eq:ADMM_generic}
				\begin{align}
					\xitp &= \argmin_{\x_i \in \R^\n}\left\{\ell_i(\x_i) \! - \!  \x_i\T \sum_{j \in \cN_i}\zijt \! + \! \frac{\rho \dii}{2} \norm{\x_i}^2\right\}
					\\
					\zijtp &= (1 - \alpha)\zijt - \alpha\zjit + 2\alpha\rho\xjtp,
				\end{align}
			\end{subequations}
			where $\alpha \in (0,1)$ and $\rho > 0$ are tuning parameters.

			\subsection{Proof of Theorem~\ref{th:theorem_generic}}
			\label{sec:proof_generic}

			Let us define $\tzt \coloneqq \zt - \zeq(\xt)$ and rewrite~\eqref{eq:interconnected_system_generic} as 
      \begin{subequations}\label{eq:interconnected_system_generic_err}
        \begin{align}
          \xtp &= \xt + \pr f(\xt,\tzt + \zeq(\xt),\iter)\label{eq:slow_system_generic_err}
          \\
          \tztp &= g(\tzt  +  \zeq(\xt),\xt,\iter) -  \zeq(\xt)  +  \Delta \zeq(\xtp,\xt),\label{eq:fast_system_generic_err}
        \end{align}
      \end{subequations}
      where $\Delta \zeq(\xtp,\xt) \coloneqq - \zeq(\xtp) + \zeq(\xt)$. Pick $W$ as in~\eqref{eq:W_generic}. 
			By evaluating $\Delta W(\xt,\iter) \coloneqq W(\xtp,\iterp) - W(\xt,\iter)$ along the trajectories of~\eqref{eq:slow_system_generic_err}, we obtain 
      \begin{align}
        &
        \Delta W(\xt,\iter) 
        \notag\\
        &= W(\xt + \pr f(\xt,\tzt + \zeq(\xt),\iter),\iterp) 
				- W(\xt,\iter)
        \notag\\
        &\stackrel{(a)}{=} 
        W(\xt + \pr f(\xt,\zeq(\xt),\iter),\iterp) - W(\xt,\iter) 
        \notag\\
        &\hspace{.4cm}
        + W(\xt + \pr f(\xt,\tzt + \zeq(\xt),\iter),\iterp) 
        \notag\\
        &\hspace{.4cm}
        - W(\xt + \pr f(\xt,\zeq(\xt),\iter),\iterp)
        \notag\\
        &\stackrel{(b)}{\leq}
        -\pr c_3\norm{\xt}^2 + W(\xt + \pr f(\xt,\tzt + \zeq(\xt),\iter),\iterp) 
         \notag\\
        &\hspace{.4cm}
        - W(\xt + \pr f(\xt,\zeq(\xt),\iter),\iterp)
				\notag\\
        &\stackrel{(c)}{\leq}
        -\pr c_3\norm{\xt}^2 + 2\pr c_4 \lip_f\norm{\tzt}\norm{\xt} 
         \notag\\
         &\hspace{.4cm}
        + \pr^2c_4 \lip_f\norm{\tzt}\norm{ f(\xt,\tzt + \zeq(\xt),\iter)}
        \notag\\
        &\hspace{.4cm}
        + \pr^2c_4\lip_f\norm{\tzt} \norm{ f(\xt, \zeq(\xt),\iter)},\label{eq:deltaU_generic}
      \end{align}
      where in $(a)$ we add and subtract the term $W(\xt + \pr f(\xt,\zeq(\xt),\iter),\iterp)$, in $(b)$ we exploit~\eqref{eq:W_minus_generic} to bound the difference of the first two terms, in $(c)$ we use~\eqref{eq:W_bound_generic}, the Lipschitz continuity of $f$, and the triangle inequality. 
			By recalling that $f(0, \zeq(0),\iter) = 0$ we can write 
      \begin{align}
        &\norm{f(\xt, \tzt + \zeq(\xt),\iter)}
				\notag\\
				 &= \norm{ f(\xt, \tzt + \zeq(\xt),\iter) - f(0, \zeq(0),\iter)}
        \notag\\
        &\stackrel{(a)}{\leq}
        \lip_f\norm{\xt} + \lip_f\norm{\tzt + \zeq(\xt) - \zeq(0)},\nonumber \\
        &\stackrel{(b)}{\leq}
        \lip_f(1 + \lip_{\text{eq}})\norm{\xt} + \lip_f\norm{\tzt},\label{eq:bound_Lf_L_h_wt}
      \end{align}
      where in $(a)$ we use the Lipschitz continuity of $f$ and $\zeq$, and in $(b)$ we use the Lipschitz continuity of $\zeq$ together with the triangle inequality. 
			With similar arguments, we have
      \begin{align}
        \norm{ f(\xt, \zeq(\xt),\iter)} \leq \lip_f (1+ \lip_{\text{eq}})\norm{\xt}.\label{eq:bound_L_f_L_h}
      \end{align}
      Using inequalities~\eqref{eq:bound_Lf_L_h_wt} and~\eqref{eq:bound_L_f_L_h} we then bound~\eqref{eq:deltaU_generic} as 
      \begin{align}
        &
				\Delta W (\xt,\iter) 
				\notag\\
				&\leq -\pr c_3\norm{\xt}^2 + 2\pr c_4 \lip_f\norm{\tzt}\norm{\xt} 
        + \pr^2c_4 \lip_f^2\norm{\tzt}^2
        \notag\\
        &\hspace{.4cm}
        +  2 \pr^2c_4 \lip_f^2(1+\lip_{\text{eq}})\norm{\tzt}\norm{\xt}
        \notag\\
        &\leq
        -c_3\norm{\xt}^2 \! + \! \pr^2 k_3\norm{\tzt}^2
        \! + \! (\pr k_1 + \pr^2 k_2)\norm{\tzt}\norm{\xt}, \label{eq:deltaU_generic_final} 
      \end{align}
      where we introduce the constants
      \begin{align*}
        k_1 &\coloneqq 2c_4\lip_f,
        \quad
        k_2 \coloneqq 2c_4 \lip_f^2(1+\lip_{\text{eq}}),
        \quad%
        k_3 \coloneqq c_4 \lip_f^2.
      \end{align*}
      We now evaluate $\Delta U(\tzt,\iter) \coloneqq U(\tztp,\iterp) - U(\tzt,\iter)$ (see~\eqref{eq:U_generic}) along the trajectories of~\eqref{eq:fast_system_generic_err}, obtaining
      \begin{align}
        &\Delta U(\tzt,\iter) 
        \notag\\
        &
        = U(g(\tzt + \zeq(\xt),\xt,\iter) -\zeq(\xt) + \Delta \zeq(\xtp,\xt),\iterp) 
        \notag\\
        &\hspace{.4cm}
        - U(\tzt,\iter)
        \notag\\
        &\stackrel{(a)}{\leq}  U(g(\tzt + \zeq(\xt),\xt),\iter - \zeq(\xt),\iterp) - U(\tzt,\iter) 
        \notag\\
        &\hspace{.4cm}
        - U(g(\tzt + \zeq(\xt),\xt,\iter) - \zeq(\xt),\iterp)
        \notag\\
        &\hspace{.4cm}
        + U(g(\tzt + \zeq(\xt),\xt,\iter) -\zeq(\xt) + \Delta \zeq(\xtp,\xt),\iterp)
        \notag\\        
        &\stackrel{(b)}{\leq}  -b_3\norm{\tzt}^2  - U(g(\tzt + \zeq(\xt),\xt,\iter) - \zeq(\xt),\iterp)
        \notag\\
        &\hspace{.4cm}
        + U(g(\tzt + \zeq(\xt),\xt,\iter) -\zeq(\xt) + \Delta \zeq(\xtp,\xt),\iterp) 
        \notag\\
        &\stackrel{(c)}{\leq}  -b_3\norm{\tzt}^2  
        \notag\\
        &\hspace{.4cm}
        + b_4\norm{\Delta \zeq(\xtp,\xt)}
         \notag\\
        &\hspace{.8cm}
        \times 
        \norm{g(\tzt + \zeq(\xt),\xt,\iter) - \zeq(\xt)+ \Delta \zeq(\xtp,\xt)} 
        \notag\\
        &\hspace{.4cm}
        + b_4\norm{\Delta \zeq(\xtp,\xt)}\norm{g(\tzt + \zeq(\xt),\xt,\iter) - \zeq(\xt)}
        \notag\\
        &\stackrel{(d)}{\leq}  -b_3\norm{\tzt}^2  + b_4\norm{\Delta \zeq(\xtp,\xt)}^2 
        \notag\\
        &\hspace{.4cm}
        + 2 b_4\norm{\Delta \zeq(\xtp,\xt)}\norm{g(\tzt  +  \zeq(\xt),\xt,\iter)  -  \zeq(\xt)},\label{eq:DeltaW_generic}
      \end{align}
      where in $(a)$ we add and subtract $U(g(\tzt + \zeq(\xt),\xt,\iter)-\zeq(\xt),\iterp)$, in $(b)$ we exploit~\eqref{eq:U_minus_generic} to bound the first two terms, in $(c)$ we use~\eqref{eq:U_bound_generic} to bound the difference of the last two terms, and in $(d)$ we use the triangle inequality. 
			The definition of $\Delta \zeq(\xtp,\xt)$ and the Lipschitz continuity of $\zeq$ lead to
      \begin{align}
        &\norm{\Delta \zeq(\xtp,\xt)} 
				\notag\\
				&\leq \lip_{\text{eq}}\norm{\xtp - \xt} 
        \notag\\
        &\stackrel{(a)}{\leq}
        \pr \lip_{\text{eq}}\norm{f(\xt,\tzt+\zeq(\xt),\iter)}
        \notag\\
        &\stackrel{(b)}{\leq}
        \pr \lip_{\text{eq}}\norm{f(\xt,\tzt+\zeq(\xt),\iter) - f(0, \zeq(0),\iter)}
        \notag\\
        &\stackrel{(c)}{\leq}
        \pr \lip_{\text{eq}}\lip_f(1  +  \lip_{\text{eq}})\norm{\xt}  +  \pr \lip_{\text{eq}}\lip_f\norm{\tzt},\label{eq:bound_Deltah}
      \end{align}
      where in $(a)$ we use the update~\eqref{eq:slow_system_generic_err}, in $(b)$ we add the term $f(0,\zeq(0),\iter)$ since this is zero, and in $(c)$ we use the triangle inequality and the Lipschitz continuity of $f$ and $\zeq$. 
			Moreover, since $g(\zeq(\xt),\xt,\iter) = \zeq(\xt)$, we obtain
      \begin{align}
        &\norm{g(\tzt + \zeq(\xt),\xt,\iter) - \zeq(\xt)} 
        \notag\\
        &
        =\norm{g(\tzt + \zeq(\xt),\xt,\iter) - g(\zeq(\xt),\xt,\iter)}
				\notag\\
        &
        \leq 
        \lip_g\norm{\tzt},\label{eq:bound_g}
      \end{align}
      where the inequality is due to the Lipschitz continuity of $g$. 
			Using inequalities~\eqref{eq:bound_Deltah} and~\eqref{eq:bound_g}, we then bound~\eqref{eq:DeltaW_generic} as 
      \begin{align}
        \Delta U(\tzt,\iter) 
        &\leq -b_3\norm{\tzt}^2  + 2\pr b_4 \lip_{\text{eq}} \lip_g\lip_f(1 + \lip_{\text{eq}})\norm{\xt}\norm{\tzt} 
        \notag\\
        &\hspace{.4cm}
        + 2\pr b_4 \lip_{\text{eq}} \lip_g\lip_f\norm{\tzt}^2
				\notag\\
        &\hspace{.4cm}
        +\pr^2b_4 \lip_{\text{eq}}^2 \lip_f^2(1 + \lip_{\text{eq}})^2\norm{\xt}^2 
        \notag\\
        &\hspace{.4cm}
        + 2\pr^2b_4 \lip_{\text{eq}}^2 \lip_f^2(1 + \lip_{\text{eq}})\norm{\xt}\norm{\tzt} 
				\notag\\
        &\hspace{.4cm}
				+ \pr^2 b_4 \lip_{\text{eq}}^2 \lip_f^2\norm{\tzt}^2
        \notag
				\\
        &\leq
        (-b_3 +\pr k_6 + \pr^2 k_7)\norm{\tzt}^2 + \pr^2 k_8\norm{\xt}^2
        \notag\\
        &\hspace{.4cm}
        + (\pr k_4 + \pr^2 k_5)\norm{\xt}\norm{\tzt}, \label{eq:deltaW_generic_final}
      \end{align}
      where we introduce the constants 
      \begin{align*}
        k_4 &\coloneqq 2b_4 \lip_{\text{eq}}\lip_g\lip_f(1 + \lip_{\text{eq}}), \quad
        &&k_5 \coloneqq 2b_4 \lip_{\text{eq}}^2\lip_f^2(1 + \lip_{\text{eq}}),
        \\
        k_6 &\coloneqq 2b_4 \lip_{\text{eq}}\lip_g\lip_f,
        \quad
        &&k_7 \coloneqq b_4 \lip_{\text{eq}}^2\lip_f^2,
        \\
        k_8 &\coloneqq b_4 \lip_{\text{eq}}^2\lip_f^2(1+\lip_{\text{eq}})^2.
      \end{align*}
      We now consider $V: \mathcal{D} \times \R^m \times \N \to \R$ defined as
      \begin{align*}
        V(\xt,\tzt,\iter) = W(\xt,\iter) + U(\tzt,\iter). 
      \end{align*}
      By evaluating $\Delta V(\xt,\tzt,\iter) \coloneqq V(\xtp,\tztp,\iterp) - V(\xt,\tzt,\iter) = \Delta W(\xt,\iter) + \Delta U(\tzt,\iter)$ along the trajectories of~\eqref{eq:interconnected_system_generic_err}, we can use the results~\eqref{eq:deltaU_generic_final} and~\eqref{eq:deltaW_generic_final} to write 
      \begin{align}
        \Delta V(\xt,\tzt,\iter) &\leq -\begin{bmatrix}
          \norm{\xt}\\
          \norm{\tzt}
        \end{bmatrix}\T Q(\pr) \begin{bmatrix}
          \norm{\xt}\\
          \norm{\tzt}
        \end{bmatrix},\label{eq:deltaV_generic}
      \end{align}
      where we define the matrix $\cQ(\pr) = \cQ(\pr)\T \in \R^2$ as 
      \begin{align*}
        &\cQ(\pr) \coloneqq \begin{bmatrix}
          \pr c_3 -\pr^2k_8& q_{21}(\pr)\\
          q_{21}(\pr) & b_3 - \pr k_6 - \pr^2 (k_3 + k_7)
        \end{bmatrix},
      \end{align*}
      with $q_{21}(\pr) \coloneqq -\frac{1}{2}(\pr (k_1+k_4) + \pr^2 (k_2+k_5))$. By Sylvester criterion~\cite{khalil2002nonlinear}, we know that $\cQ > 0$ if and only if 
      \begin{align}\label{eq:conditions_generic}
        \pr c_3b_3 > p(\pr)
      \end{align}
      where the polynomial $p(\pr)$ is defined as 
      \begin{align}
        p(\pr) &\coloneqq q_{21}(\pr)^2 + \pr^2 c_3k_6 
        + \pr^3 c_3(k_3 + k_7)+ \pr^2b_3k_8
                \notag\\
        &\hspace{.4cm} 
        -\pr^3k_6k_8 - \pr^4k_8(k_3+k_7).
      \end{align}
      We note that $p$ is a continuous function of $\pr$ and $\lim_{\pr \to 0}p(\pr)/\pr = 0$. 
	  Hence, there exists some $\bar{\pr} \in (0,\bar{\pr}_1)$ so that~\eqref{eq:conditions_generic} is satisfied for all $\pr \in (0,\bar{\pr})$. Under such a choice of $\pr$, and denoting by $q_{\text{small}} > 0$ the smallest eigenvalue of $\cQ(\pr)$, we bound~\eqref{eq:deltaV_generic} as 
      \begin{align*}
        \Delta V(\xt,\tzt,\iter) \leq -q_{\text{small}}\norm{\begin{bmatrix}\norm{\xt}\\
            \norm{\tzt}\end{bmatrix}}^2,
      \end{align*}
      which allows us to conclude, in view of \cite[Theorem~13.2]{chellaboina2008nonlinear}, that $(0, 0)$ is an exponentially stable equilibrium point for system~\eqref{eq:interconnected_system_generic_err}. 
	  The theorem's conclusion follows then by considering the definition of exponentially stable equilibrium point and by reverting to the original coordinates $(\xt,\zt)$.

\end{document}